\newcommand{\Z}{\mathbb{Z}}
\newcommand{\C}{\mathbb{C}}
\newcommand{\Hi}{\mathcal{H}}
\newcommand{\Li}{\mathcal{L}}
\newcommand{\X}{\mathsf{X}}
\newcommand{\Y}{\mathsf{Y}}
\newcommand{\cj}{\overline}
\newcommand{\op}{\mathrm}
\DeclarePairedDelimiterX{\normb}[1]{\lVert}{\rVert}{#1}
\numberwithin{equation}{section}
\newtheorem{theorem}{Theorem}[section]
\newtheorem{lemma}[theorem]{Lemma}
\newtheorem{proposition}[theorem]{Proposition}
\newtheorem{corollary}[theorem]{Corollary}
\let\origproofname\proofname
\renewcommand{\proofname}{\upshape\textbf{\origproofname}}
\theoremstyle{definition}
\newtheorem{example}[theorem]{Example}
\newtheorem{definition}[theorem]{Definition}
\newtheorem{remark}[theorem]{Remark}
\newtheorem{nota}[theorem]{Notation}
\begin{document}

\title{Representations of C*-correspondences on pairs of Hilbert spaces.}

\author{Alonso Delfín}

\date{\today}

\address{Department of Mathematics, University  of Oregon,
       Eugene OR 97403-1222, USA\\
      Department of Mathematics, University of Colorado, Boulder CO 80309-0395, USA.}

\email[]{alonsod@uoregon.edu \\
alonso.delfin@colorado.edu}

\subjclass[2010]{Primary 46L08}
\thanks{\textsc{Department of Mathematics, University of Oregon, Eugene OR 97403-1222, USA\\
Department of Mathematics, University of Colorado, Boulder CO 80309-0395, USA}
}

\begin{abstract} 
We study representations of Hilbert bimodules on pairs of Hilbert spaces. 
If $A$ is a C*-algebra and $\mathsf{X}$ is a right Hilbert $A$-module,
we use such representations to 
faithfully represent the C*-algebras 
$\mathcal{K}_A(\mathsf{X})$ and $\mathcal{L}_A(\mathsf{X})$. 
We then extend this theory to define representations 
of $(A,B)$ C*-correspondences on a pair of Hilbert spaces 
and show how these can be obtained from any nondegenerate
representation of $B$.
As an application of such representations, 
we give necessary and sufficient conditions on 
an $(A,B)$ C*-correspondences to admit a Hilbert $A$-$B$-bimodule structure. 
Finally, we show how to represent
the interior tensor product of two C*-correspondences. 
\end{abstract}
\maketitle

 \section{Introduction}

Let $A$ and $B$ be C*-algebras. Research on normed $A$-$B$-bimodules usually 
deals with either Hilbert $A$-$B$-bimodules or $(A,B)$ C*-correspondences.
In this paper we study representations of both kinds of bimodules on pairs of Hilbert spaces, 
which roughly consists of realizing the bimodule as a closed subspace of 
$\Li(\Hi_0, \Hi_1)$, 
the space of bounded linear maps from a Hilbert space $\Hi_0$ 
to a Hilbert space $\Hi_1$. 
The Hilbert bimodule case was introduced by 
R. Exel \cite{Exel1993}. We generalize Exel's definition 
to the general C*-correspondence setting. 
Representations of $(A,A)$ C*-correspondences on 
a Hilbert space, 
sometimes also called covariant representations, have been studied thoroughly in the past. See for instance 
\cite{pim1997}, \cite{MuhlySolel1998}, and \cite{kat2004}. 
These representations are the main object used to construct the
Cuntz-Pimsner algebra of an $(A,A)$ C*-correspondence. 
Our definition for representations of $(A,B)$ C*-correspondences on a pair of Hilbert spaces
agrees with the other definitions in the literature when $A=B$. 
We use representations of $(A,B)$ C*-correspondences on pairs of Hilbert 
spaces as a tool for mainly three purposes:
\begin{enumerate}
\item Give representations of adjointable and compact-module maps on right Hilbert modules. 
\item Answer when a general $(A,B)$ C*-correspondence can be uniquely given the structure of a Hilbert $A$-$B$-bimodule.
\item Represent the interior tensor product of C*-correspondences as the product of suitable representations of the factors.
\end{enumerate} 

Unfortunately, in the early literature, 
$(A,A)$ C*-correspondences were sometimes
also referred to as Hilbert bimodules over $A$. 
See for instance \cite{pim1997} and \cite{KajPinWat1998}.
In this paper we follow the current naming conventions. 
That is, for us 
both Hilbert $A$-$B$-bimodules and $(A,B)$ C*-correspondences come equipped with a 
$B$-valued right inner product
which in turn defines a norm on the module.
The difference between them lies in the fact that Hilbert $A$-$B$-bimodules  
come with an $A$-valued left inner product compatible with the $B$-valued one, 
while for $(A,B)$ C*-correspondences 
we require $A$ to act as adjointable operators, but an $A$-valued inner product 
is not assumed.

By a representation of a Hilbert $A$-$B$-bimodule $\X$ on 
a pair of Hilbert spaces $(\Hi_0, \Hi_1)$, we mean a triple of maps $(\lambda_A, \rho_B, \pi_\X)$, where
$\lambda_A$ is a representation of $A$ on $\Hi_1$,
$\rho_B$ is a representation of $B$ on $\Hi_0$, and
$\pi_\X: \X \to \Li(\Hi_0, \Hi_1)$ is a linear map,
such that $\pi_\X(\X)$ has the Hilbert $(\lambda_A(A), \rho_B(B))$-bimodule
structure where both module actions and both inner
products are given by multiplication of operators. 
See Definition \ref{RepBimodDef} for more details. 
That such representations do exist is shown in
Propositions 4.7 and 4.8 of \cite{Exel1993}.
Since every Hilbert $A$-$B$-bimodule is 
in particular an $(A,B)$ C*-correspondence, a natural question arises here: can we 
also represent an $(A,B)$ C*-correspondence on a pair of Hilbert spaces
in a way that generalizes representations of Hilbert bimodules? We give 
a positive answer to this question 
in Theorem \ref{CorrespRep} by establishing 
that any $(A,B)$ C*-correspondence can be represented on a particular
pair of Hilbert spaces $(\Hi_0, \Hi_1)$. 
In the $(A,B)$ C*-correspondence cases, as in Exel's result for Hilbert bimodules, 
the Hilbert space $\Hi_0$ 
will also come from a given nondegenerate representation of $B$ 
and $\Hi_1$ will be obtained so that $A$ is nondegenerately represented on $\Hi_1$.
The methods we use differ significantly 
from those used by Exel in \cite{Exel1993}, Murphy in \cite{murphy_1997}, and Zettl in \cite{Zettl1983}
for their analogous results for Hilbert bimodules and Hilbert modules. 
In particular, our methods do not rely on the linking algebra of a Hilbert bimodule $\X$, 
nor on the theory of positive definite kernels for Hilbert modules,
nor on the dual module $\X''$ of a right Hilbert $A$-module $\X$, 
which is a right Hilbert $A^{**}$-module.
Nevertheless, our methods
can easily be adapted to work in the Hilbert bimodule setting, 
making our notion of representations of C*-correspondences more general. 
Indeed, in Theorem \ref{BimodRepInCorrespRep} we adapt our methods from 
the C*-correspondence case to show the existence of a representation $(\lambda_A, \rho_B, \pi_\X)$ for any Hilbert $A$-$B$-bimodule $\X$
on a pair of Hilbert spaces $(\Hi_0, \Hi_1)$. 
In contrast with the analogue result from \cite{Exel1993}, we give an explicit formula for the map $\pi_\X$. All this is used in our proofs for 
Theorem \ref{Suff_Nec_Cond_Corres_is_Bimod} and Corollary \ref{COR_Suff_Nec_Cond_Corres_is_Bimod}, where we give necessary and sufficient conditions for an
$(A,B)$ C*-correspondence to uniquely admit a Hilbert $A$-$B$-bimodule structure.  

An important tool when working with C*-correspondences is their interior tensor product. 
We show, in Theorem \ref{TensorCorres}, that representations of  C*-correspondences
are well behaved with respect to the interior tensor product. By this we mean that 
it is always possible to find a representation of the tensor product correspondence by looking 
at suitable representations of the factors. Moreover,
if $(\X, \varphi_\X)$ and $(\Y, \varphi_\Y)$ 
are C*-correspondences whose interior tensor product makes sense, we can always 
think of $\X \otimes_{\varphi_\Y} \Y$ as $\cj{\tau_\X(\X) \pi_\Y(\Y)}$, 
where $\tau_\X$ and $\pi_\Y$ come from carefully chosen representations of 
$(\X, \varphi_\X)$ and $(\Y, \varphi_\Y)$. In practice, 
this makes the tensor product more manageable, as elementary tensors are now 
replaced by compositions of operators. This can be compared with Theorem 3.2 in 
\cite{murphy_1997}, where Murphy uses that right Hilbert modules can 
be concretely represented on pairs of Hilbert spaces to give an elementary 
construction of the exterior tensor product of right Hilbert modules. 

A main advantage of having a right Hilbert module represented as a subspace 
of $\Li(\Hi_0, \Hi_1)$ is that, assuming 
some nondegeneracy conditions, the C*-algebras of adjointable maps 
and compact-module maps of the module can be faithfully represented on $\Hi_1$. 
Indeed, this is shown in Propositions \ref{K_AX}, \ref{L_AX}, and \ref{RepforKandL}.
These representations have not been studied in the current literature. However, 
they play an important role in current work dealing with modules and 
correspondences over $L^p$-operator algebras (See Chapters V and VI in \cite{Del2023}). 

In fact, our main motivation to study representations of C*-correspondences
on a pair of Hilbert spaces is that they naturally give a potential definition of 
what a correspondence might be when replacing C*-algebras by 
general Banach algebras. For instance, in Definition \ref{RepMod} one can
replace Hilbert spaces by $L^p$-spaces
and define a module over an $L^p$-operator algebra to be a pair of subspaces 
that satisfy the conditions satisfied by the pair $(\pi_\X(\X), \pi_\X(\X)^*)$ in Definition \ref{RepMod}. 
Motivated by the results in this paper, the author's doctoral dissertation, see \cite{Del2023},
considers a version of the
Cuntz-Pimsner algebras for $L^p$-operator algebras. 
Indeed, for $p \in (1, \infty)$ and $d \in \Z_{\geq 2}$, it is possible to use a 
construction similar to the Cuntz-Pimsner construction, see \cite{pim1997} and \cite{kat2004},
but on a $\Li(\ell^p_1)$-module to get $\mathcal{O}_d^p$, the $L^p$-version of the Cuntz algebra introduced in \cite{ncp2012AC}.  Many results from this paper 
have motivated definitions for our investigations of modules over $L^p$-operator algebras, $L^p$-correspondences 
and the $L^p$-operator algebras generated by these. For details see Chapters V and VI in \cite{Del2023}.

\textbf{Structure of the paper:} In Section \ref{s1} we start with the particular case of right Hilbert modules. 
Roughly speaking, for a pair of Hilbert spaces $(\Hi_0,\Hi_1)$ and 
a concrete C*-algebra $A \subseteq \Li(\Hi_0)$, 
we analyze the behavior of closed subspaces of $\mathcal{L}(\Hi_0, \Hi_1)$
that have the obvious structure of right Hilbert $A$-modules. We pay close 
attention to the adjointable maps and compact-module maps 
of these closed subspaces. 
We point out in Section \ref{s2} that any right Hilbert $A$-module 
can be represented as an isometric copy of a closed subspace of 
$\mathcal{L}(\Hi_0, \Hi_1)$ for a pair of Hilbert spaces $(\Hi_0,\Hi_1)$. 
In fact, we see that this is true for any Hilbert bimodule. 

In Section \ref{s3} we introduce representations of 
$(A,B)$ C*-correspondences on a pair of Hilbert spaces. 
We prove that this is indeed a generalization of 
Exel's theory of representations for 
Hilbert $A$-$B$-bimodules. This yields 
a framework for representations of
general C*-correspondences
which, as in the Hilbert bimodule case, shows that any C*-correspondence
is isometrically isomorphic to a closed subspace of 
$\mathcal{L}(\Hi_0, \Hi_1)$ for a pair of Hilbert spaces $(\Hi_0,\Hi_1)$. 
As an application, we use representations of C*-correspondences to show that 
an $(A,B)$ C*-correspondence $(\X, \varphi)$ admits 
a unique structure of a Hilbert $A/\op{ker}(\varphi)$-$B$-bimodule if and only if $\mathcal{K}_B(\X) \subseteq \varphi(A)$. This in turn gives 
necessary and sufficient conditions for $(\X, \varphi)$ to admit a unique structure of a Hilbert $A$-$B$-bimodule. 
We end the paper by showing that, given any two C*-correspondences that
share the middle action, it is always possible to represent their interior tensor 
product on a pair of Hilbert spaces by using particular representations 
of the original C*-correspondences. 

We end our introduction by establishing some of our notational conventions. 

Let $a \colon V_0 \to V_1$ be a linear map between vector spaces. In this case we follow the common convention of suppressing parentheses for linear maps and write $a \xi$ for the action of $a$ on $\xi \in V_0$. However, if $\X$ and $\Y$ are vector spaces that are also modules over an algebra $A$ and $t \colon  \X \to \Y$ is a linear module map, then we write $t(x)$ for the action of $t$ on $x \in \X$. This is needed to avoid some potential confusion when both $x$ and $t(x)$ happen to also be linear maps between vector spaces, which will occur frequently in this paper.  

If $\X$ is a subspace of linear maps between vector spaces $V_0$ and $V_1$, the product $\X V_0$ is defined as 
the linear span of elements in $\X$ acting on vectors from $V_0$, that is 
\[
\X V_0 = \op{span}\{ x\xi \colon x \in \X \mbox{ and } \xi \in V_0\} \subseteq V_1.
\] 

\begin{nota}\label{HMconv}
We fix some terminology for Hilbert modules over C*-algebras. 
The $A$-valued right inner product for a right Hilbert module 
will be denoted by $\langle - , - \rangle_A$. The map $(x,y) \mapsto \langle x , y \rangle_A$ is assumed 
to be linear in the second variable and conjugate linear 
in the first one. Similarly, the $A$-valued left 
inner product for a left Hilbert module will be denoted by ${}_A\langle - , - \rangle$. 
The map $(x,y) \mapsto {}_A\langle x , y \rangle$
is assumed 
linear in the first variable and conjugate linear 
in the second one. If $\X$ is any right Hilbert $A$-module, we use $\mathcal{L}_A(\X)$
to denote adjointable maps from $\X$ to itself. For each $x, y \in \X$ we have the
``rank one'' operator $\theta_{x,y}\in \Li_A(\X)$,
given by $\theta_{x,y}(z)=x\langle y, z\rangle_A$ for any $z \in \X$.  
We write $\mathcal{K}_A(\X)$ for the compact-module maps from $\X$ to itself,  
which are defined as the closed linear span of the ``finite rank'' operators. That is, 
\[
\mathcal{K}_A(\X) = \cj{\op{span}\{ \theta_{x,y} \colon x,y \in \X\}}.
\]
\end{nota}

Finally, we regard Hilbert spaces as right Hilbert $\C$-modules. 
For this reason, our convention for inner products of Hilbert spaces is the physicist's one: they are linear in the second variable and conjugate linear in 
the first one. 

\section{Concrete Hilbert Modules}\label{s1}

In this section, we describe a concrete example of a right Hilbert module $\X$
over a concrete C*-algebra $A \subseteq \Li(\Hi_0)$ for a Hilbert space $\Hi_0$. 
This is Example \ref{mainE} below, which was studied by 
G. J. Murphy in
Section 3 of \cite{murphy_1997}, where it was referred to as a 
\textit{Concrete Hilbert C*-Module}. 
We then provide useful representations for the C*-algebras 
$\Li_A(\X)$ and $\mathcal{K}_A(\X)$.
In Section \ref{s2} we will see that any right Hilbert $A$-module can be represented in this fashion.

\begin{example}\label{mainE}
Let $\Hi_0, \Hi_1$ be Hilbert spaces and let $A \subseteq \Li(\Hi_0)$ be a concrete 
C*-algebra. Suppose that $\X \subseteq \Li(\Hi_0, \Hi_1)$ is a closed subspace such
that $xa \in \X$ for all $x \in \X$ and all  $a\in A$, and such that $x^*y \in A$ for all
$x,y \in \X$.  For each $x, y \in \X$ we put 
\begin{equation}\label{IP_mainE}
\langle x, y \rangle_A = x^*y \in A.
\end{equation}
\end{example}
\begin{proposition}\label{mainEisHM}
Let $\X$ be as in Example \ref{mainE}. Then, $\X$ is a right Hilbert $A$-module
with $A$-valued inner product given by equation (\ref{IP_mainE}) and 
$\|\langle x, x \rangle_A \|^{1/2} = \|x\|$. 
\end{proposition}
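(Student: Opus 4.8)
The plan is to verify directly that the axioms of a right Hilbert $A$-module hold for $\X$ with the pairing $\langle x, y\rangle_A = x^*y$, and then establish the norm identity, which is where the real content lies. First I would check the algebraic module axioms: $\X$ is a right $A$-module since $xa \in \X$ whenever $x \in \X$, $a \in A$, and associativity $x(ab) = (xa)b$ together with distributivity are inherited from composition of operators on $\Hi_0$ and $\Hi_1$. The pairing $(x,y)\mapsto x^*y$ lands in $A$ by hypothesis, is $\C$-linear in $y$ and conjugate linear in $x$ because $x \mapsto x^*$ is conjugate linear, and satisfies $\langle x, y\rangle_A^* = (x^*y)^* = y^*x = \langle y, x\rangle_A$ as well as $\langle x, ya\rangle_A = x^*(ya) = (x^*y)a = \langle x,y\rangle_A\, a$.

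Next I would check positivity: for $x \in \X$, $\langle x, x\rangle_A = x^*x \in A \subseteq \Li(\Hi_0)$ is a positive operator, and positivity of an element of $A$ in $\Li(\Hi_0)$ coincides with positivity in $A$ since $A$ is a C*-subalgebra (the positive cone of a C*-subalgebra is the intersection of the ambient positive cone with the subalgebra). Definiteness: if $x^*x = 0$ then $\|x\xi\|^2 = \langle x\xi, x\xi\rangle_{\Hi_1} = \langle \xi, x^*x\xi\rangle_{\Hi_0} = 0$ for all $\xi \in \Hi_0$, so $x = 0$ as a bounded operator.

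For the norm identity, the key computation is $\|\langle x,x\rangle_A\| = \|x^*x\|_{\Li(\Hi_0)} = \|x\|_{\Li(\Hi_0,\Hi_1)}^2$, which is exactly the C*-identity for the operator $x \in \Li(\Hi_0,\Hi_1)$; note $\|x^*x\|$ computed in $A$ agrees with its value in $\Li(\Hi_0)$ since the inclusion $A \subseteq \Li(\Hi_0)$ is isometric. This shows that the Hilbert-module norm $x \mapsto \|\langle x,x\rangle_A\|^{1/2}$ coincides with the operator norm inherited from $\Li(\Hi_0,\Hi_1)$. Finally, completeness: $\X$ is assumed to be a closed subspace of the Banach space $\Li(\Hi_0,\Hi_1)$, hence complete in the operator norm, which we have just identified with the module norm; therefore $\X$ is a Hilbert $A$-module rather than merely a pre-Hilbert one.

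The main obstacle — really the only non-formal point — is making sure that positivity and the norm of $x^*x$ are correctly interpreted in $A$ versus in $\Li(\Hi_0)$; once one invokes that a C*-subalgebra inherits both its order structure and its norm isometrically, everything reduces to the C*-identity and the elementary properties of operator adjoints, composition, and the closedness hypothesis on $\X$.
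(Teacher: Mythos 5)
Your proposal is correct and follows essentially the same route as the paper: verify the right-module and $A$-valued inner product axioms, identify the module norm $\|\langle x,x\rangle_A\|^{1/2}$ with the operator norm of $x$ in $\Li(\Hi_0,\Hi_1)$, and deduce completeness from the closedness of $\X$. The only cosmetic difference is that the paper justifies $\|x^*x\| = \|x\|^2$ by realizing $x$ as the lower-left corner of $\Li(\Hi_0\oplus\Hi_1)$ and applying the C*-equation there, whereas you invoke the same identity directly for an operator between two Hilbert spaces; both are standard and equivalent.
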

\begin{proof}
It is clear that $\X$ is a right $A$-module. It is easily checked that $(x,y) \mapsto \langle x,y\rangle_A$ satisfies all the axioms of an 
$A$-valued inner product on $\X$. We claim that $\X$ is complete with the induced norm 
$\|x\|_A=\|\langle x, x \rangle_A \|^{1/2}$. Indeed, elements of the 
C*-algebra $\Li(\Hi_0 \oplus \Hi_1)$ can be written as $2 \times 2$ operator 
valued matrices and 
$\Li(\Hi_0, \Hi_1)$ is 
isometrically isomorphic to the lower left corner of $\Li(\Hi_0 \oplus \Hi_1)$, while $\Li(\Hi_0)$ is isomorphic to the upper left corner.
Hence, if $x \in \X$, the C*-equation at the second step yields 
\[
\| x\|^2 = \Bigl\| \begin{pmatrix}
0 & 0 \\
x & 0
\end{pmatrix}\Bigr\|^2 
=
 \Bigl\| \begin{pmatrix}
x^*x & 0 \\
0 & 0
\end{pmatrix}\Bigr\|  
 = 
\| x^*x \| = \|x\|^2_A.
\]
The claim now follows because $\X$ is closed in $\Li(\Hi_0, \Hi_1)$.
Thus, $\X$ is indeed a right Hilbert $A$-module. 
\end{proof}

\begin{remark}
Thanks to Proposition \ref{mainEisHM} above, when 
$\X$ is as in Example \ref{mainE}, we are free to not make any distinction between the norm $x \in X$
has as an element of $\Li(\Hi_0, \Hi_1)$ and the module norm $\|x\|_A$. Thus, 
from now on we drop the subscript $A$ and simply write $\|x\|$. 
\end{remark}

Next, we will show that the compact-module maps and adjointable maps 
of the Hilbert module in Example \ref{mainE} above can be realized as closed C*-subalgebras of $\Li(\Hi_1)$, 
provided that some nondegeneracy conditions hold. 

\begin{proposition}\label{K_AX}
Let $\X$ be the right Hilbert $A$-module described in Example \ref{mainE} above. 
Suppose that $\X\Hi_0$
is dense in $\Hi_1$. Then, there is a $*$-isomorphism from $\mathcal{K}_A(\X)$ to
\[
 \cj{\op{span} \left\{ xy^* \colon x,y \in \X \right\}} \subseteq \Li(\Hi_1)
\] 
which sends $\theta_{x,y}$ to $xy^*$ for $x,y \in \X$.
\end{proposition}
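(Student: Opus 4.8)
The plan is to manufacture the $*$-isomorphism by first defining it on the dense $*$-subalgebra of finite-rank operators and then extending it by continuity; accordingly, write $\B$ for the subspace $\cj{\op{span}\{xy^*\colon x,y\in\X\}}\subseteq\Li(\Hi_1)$ from the statement. A first observation is that $xy^*$ really lies in $\Li(\Hi_1)$ for $x,y\in\X$ and that $\B$ is a C*-subalgebra of $\Li(\Hi_1)$: it is closed and self-adjoint by construction (as $(xy^*)^*=yx^*$), and it is multiplicatively closed since $(xy^*)(uv^*)=\bigl(x(y^*u)\bigr)v^*=\bigl(x\langle y,u\rangle_A\bigr)v^*$ with $x\langle y,u\rangle_A\in\X$, using the two defining relations of Example \ref{mainE} together with $\langle x,y\rangle_A=x^*y$.

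On the dense $*$-subalgebra $\mathcal K^0:=\op{span}\{\theta_{x,y}\colon x,y\in\X\}$ of $\mathcal K_A(\X)$ I would define $\Psi_0\bigl(\sum_i\theta_{x_i,y_i}\bigr)=\sum_i x_iy_i^*$. The point where the hypothesis that $\X\Hi_0$ is dense in $\Hi_1$ enters—and the \emph{only} such point—is well-definedness: if $\sum_i\theta_{x_i,y_i}=0$ in $\Li_A(\X)$, then $\sum_i\theta_{x_i,y_i}(z)=\bigl(\sum_i x_iy_i^*\bigr)z=0$ as an operator $\Hi_0\to\Hi_1$ for every $z\in\X$, so $\sum_i x_iy_i^*$ vanishes on the dense subspace $\X\Hi_0$ and hence is $0$ in $\Li(\Hi_1)$. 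Linearity is clear, and $\Psi_0$ is a $*$-homomorphism because the relations $\theta_{x,y}^*=\theta_{y,x}$ and $\theta_{x,y}\theta_{u,v}=\theta_{x\langle y,u\rangle_A,v}$ mirror $(xy^*)^*=yx^*$ and $(xy^*)(uv^*)=(x\langle y,u\rangle_A)v^*$ in $\Li(\Hi_1)$. I would also record the intertwining identity $\Psi_0(U)z=U(z)$ for $U\in\mathcal K^0$ and $z\in\X$ (composition of operators on the left), which is immediate on rank-one operators.

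Proving that $\Psi_0$ is isometric is the main obstacle. One inequality is cheap: for $z\in\X$ with $\|z\|\le1$ we get $\|U(z)\|=\|\Psi_0(U)z\|\le\|\Psi_0(U)\|$, so $\|U\|_{\Li_A(\X)}\le\|\Psi_0(U)\|_{\Li(\Hi_1)}$. For the reverse inequality, since $\Psi_0(U)$ is a bounded operator and $\X\Hi_0$ is dense it suffices to estimate $\|\Psi_0(U)(\sum_{k=1}^p z_k\xi_k)\|=\|\sum_k U(z_k)\xi_k\|$ for $z_k\in\X$, $\xi_k\in\Hi_0$; the triangle inequality is too wasteful here, so I would use a Gram-matrix estimate. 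Using the standard fact that a finite tuple $\mathbf z=(z_1,\dots,z_p)$ in $\X$ defines an adjointable operator $T_{\mathbf z}\colon A^p\to\X$, $(a_k)_k\mapsto\sum_k z_k a_k$, with $T_{\mathbf z}^*T_{\mathbf z}=[\langle z_j,z_k\rangle_A]\in M_p(A)$, and that $T_{U\mathbf z}=U\circ T_{\mathbf z}$ for the tuple $U\mathbf z=(U(z_1),\dots,U(z_p))$, we obtain
\[
[\langle U(z_j),U(z_k)\rangle_A]=T_{\mathbf z}^*U^*UT_{\mathbf z}\le\|U\|^2\,T_{\mathbf z}^*T_{\mathbf z}=\|U\|^2\,[\langle z_j,z_k\rangle_A]
\]
in $M_p(A)$, hence in $\Li(\Hi_0^p)$. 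Pairing this operator inequality with $(\xi_1,\dots,\xi_p)$ and using $\langle x,y\rangle_A=x^*y$ gives $\|\sum_k U(z_k)\xi_k\|^2\le\|U\|^2\|\sum_k z_k\xi_k\|^2$, whence $\|\Psi_0(U)\|\le\|U\|$ after passing to the dense subspace $\X\Hi_0$.

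Finally, $\Psi_0$ is an isometric $*$-homomorphism from the dense $*$-subalgebra $\mathcal K^0\subseteq\mathcal K_A(\X)$ into $\B$ whose range $\op{span}\{xy^*\}$ is dense in $\B$; extending it by continuity produces an isometric $*$-homomorphism $\Psi\colon\mathcal K_A(\X)\to\B$, and its range—being isometric, hence closed, and dense in $\B$—is all of $\B$. Thus $\Psi$ is the desired $*$-isomorphism, and it sends $\theta_{x,y}$ to $xy^*$ by construction.
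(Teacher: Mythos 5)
Your proof is correct, but it runs in the opposite direction from the paper's and therefore has to do more quantitative work. The paper defines the map from the operator side to the module side: it sends $xy^*$ to $\theta_{x,y}$, observes that $\bigl\|\sum_j\theta_{x_j,y_j}(z)\bigr\|=\bigl\|\bigl(\sum_j x_jy_j^*\bigr)z\bigr\|\le\bigl\|\sum_j x_jy_j^*\bigr\|\,\|z\|$ gives well-definedness and contractivity for free, extends to a $*$-homomorphism $\tilde{\iota}\colon\cj{K_1}\to\mathcal K_A(\X)$, and then gets isometry automatically from the C*-algebra fact that injective $*$-homomorphisms are isometric --- injectivity being exactly where the density of $\X\Hi_0$ enters (via the identity $\tilde\iota(s)(x)=sx$). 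You instead define $\Psi_0\colon\theta_{x,y}\mapsto xy^*$ directly, which forces you to prove the nontrivial inequality $\|\Psi_0(U)\|\le\|U\|$ by hand; your Gram-matrix argument via $T_{\mathbf z}^*U^*UT_{\mathbf z}\le\|U\|^2T_{\mathbf z}^*T_{\mathbf z}$ is valid and is essentially the estimate the paper only needs later, in the proof of Proposition \ref{L_AX} (there phrased through Lemma \ref{tx_tx} applied to $\kappa(t)\in\Li_{M_n(A)}(\X^n)$). What your route buys is that the isomorphism of the statement is the map you construct rather than the inverse of it, and the positivity estimate is made explicit; what the paper's route buys is economy --- the hard inequality is never needed for this proposition. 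One small inaccuracy in your prose: density of $\X\Hi_0$ is not used \emph{only} for well-definedness as you assert; you also invoke it to reduce the computation of $\|\Psi_0(U)\|$ to vectors in $\X\Hi_0$ (though this particular reduction could be justified without density, since each $y^*$ annihilates $(\X\Hi_0)^{\perp}$). This does not affect correctness.
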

\begin{proof}
Define subspaces $K_1 \subseteq \Li(\Hi_1)$ 
and  $K_2\subseteq \mathcal{K}_A(\X)$ by letting 
\[
K_1=\op{span} \left\{ xy^* \colon x,y \in \X \right\}, K_2=\op{span} \left\{ \theta_{x,y} \colon x,y \in \X \right\}.
\]
Recall that $K_2$ is dense in $\mathcal{K}_A(\X)$.
Let $n \in \Z_{\geq 1}$ and let $x_1, \ldots, x_n, y_1, \ldots, y_n \in \X$.
Then, for any $z \in \X$ 
\[
\Bigl\| \sum_{j=1}^n\theta_{x_j,y_j}(z) \Bigr\|_{\X}=
\Bigl\| \Bigl(\sum_{j=1}^n x_jy_j^*\Bigr)z\Bigr\|_{\Li(\Hi_0,\Hi_1)} 
\leq \Bigl\|\sum_{j=1}^n x_jy_j^*\Bigr\|_{\Li(\Hi_1)}\|z\|.
\] 
This implies
\begin{equation}\label{WELLDEF1}
\Bigl\| \sum_{j=1}^n\theta_{x_j,y_j} \Bigr\|_{\mathcal{K}_A(\X)} \leq 
\Bigl\|\sum_{j=1}^n x_jy_j^* \Bigr\|_{\Li(\Hi_1)}.
\end{equation}
Let $\iota\colon K_1 \to \mathcal{K}_A(\X)$ be the linear extension of the map 
which sends  $xy^*$ to $\theta_{x,y}$ for $x,y \in \X$.
That is,
 \[
 \iota\Bigl(\sum_{j=1}^nx_jy_j^*\Bigr)=\sum_{j=1}^n\theta_{x_j,y_j}.
 \] 
That $\iota$ is well defined follows from (\ref{WELLDEF1}). 
In fact, (\ref{WELLDEF1}) gives $\| \iota(k) \| \leq \|k\|$ for all $k \in K_1$.
Thus, we can extend $\iota$ by continuity to a map $\tilde{\iota} \colon \cj{K_1} \to \mathcal{K}_A(\X)$
such that $\|\tilde{\iota}(s)\|_{\mathcal{K}_A(\X)} \leq \| s \|_{\Li(\Hi_1)}$
for all $s \in \cj{K_1}$. Our goal 
is to show that $\tilde{\iota}$ is a $*$-isomorphism from $\cj{K_1}$ to $\mathcal{K}_A(\X)$.
Notice that $\tilde{\iota}$ is already a $*$-homomorphism between C*-algebras. 
We will show that $\tilde{\iota}$ is injective, which in turn will make $\tilde{\iota}$ an isometry. 
Since $\tilde{\iota}$ maps $K_1$ onto $K_2$, a dense subset of $\mathcal{K}_A(\X)$, 
proving injectivity will automatically show that $\tilde{\iota}$ is a $*$-isomorphism and this 
will finish the proof. 

Take any $s \in \cj{K_1}$ and fix $x \in \X$. We claim that
the element $sx \in \Li(\Hi_0, \Hi_1)$ is actually in $\X$ and that it is equal to $\tilde{\iota}(s)(x) \in \X$. 
Indeed, for any $k \in K_1$, the element  $kx \in \Li(\Hi_0, \Hi_1)$ is an element of $\X$ 
(because $x_1x_2^*x \in \X A \subseteq \X$ for all $x_1,x_2 \in \X$) and it coincides with $\iota(k)(x) \in \X$.
Thus, by continuity it follows that $sx=\tilde{\iota}(s)(x)$, as claimed.

Finally, to prove that $\tilde{\iota}$ is injective, 
let $s \in \cj{K_1}$ satisfy $\tilde{\iota}(s)=0$ in $\mathcal{K}_A(\X)$. 
We have to show that $s=0$ in $\Li(\Hi_1)$, but since $\X \Hi_0$ is dense in $\Hi_1$, 
it is enough to prove that $s(x\xi)=0$ for all $x \in \X$ and $\xi \in \Hi_0$. 
Indeed, thanks to our last claim, we have 
\[
s(x\xi) = sx(\xi)=[\tilde{\iota}(s)(x)]\xi=0.
\]
This finishes the proof. 
\end{proof}

Before characterizing $\Li_A(\X)$, we need to recall a useful lemma 
and prove a general result about the direct sum of Hilbert modules. 

\begin{lemma}\label{tx_tx}
Let $A$ be a C*-algebra and let $\X$ be any Hilbert $A$-module. 
Then, for any $t \in \Li_A(\X)$ and any $x \in \X$, 
we have $\langle t(x), t(x)\rangle_A \leq \|t\|^2 \langle x,x \rangle_A$.
\end{lemma}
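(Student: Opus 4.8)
The inequality to prove, $\langle t(x), t(x)\rangle_A \leq \|t\|^2 \langle x,x\rangle_A$, is an inequality of positive elements in the C*-algebra $A$, so the natural strategy is to exhibit a positive element whose difference with $\|t\|^2\langle x,x\rangle_A$ equals $\langle t(x),t(x)\rangle_A$, or to apply a known positivity criterion. The cleanest route I would take is to pass to the C*-algebra $\Li_A(\X)$ and use the rank-one operators $\theta_{x,x}$. The key observation is that for $z \in \X$,
\[
\langle t(x), t(x)\rangle_A \cdot \text{(as implemented on } \X\text{)} = t\,\theta_{x,x}\,t^*
\]
in the sense that $\theta_{t(x),t(x)} = t\theta_{x,x}t^*$, which follows directly from the definition $\theta_{u,v}(z) = u\langle v,z\rangle_A$ together with adjointability of $t$: indeed $\theta_{t(x),t(x)}(z) = t(x)\langle t(x),z\rangle_A = t(x)\langle x, t^*(z)\rangle_A = t(\theta_{x,x}(t^*(z)))$.

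Once this identity is in hand, the plan is: first, note $\theta_{x,x} \leq \|x\|^2 \cdot \mathrm{id}_\X$ is \emph{not} quite what I want; instead I use that $0 \leq \theta_{x,x}$ and hence $0 \leq t\theta_{x,x}t^* \leq \|t\|^2 \|t^*\| \cdots$ — no, more carefully: from $0 \le \theta_{x,x}$ we get $0 \le t\theta_{x,x}t^*$, and this gives positivity but not the norm bound directly. The right manoeuvre is to work with the inner products themselves. For any $z \in \X$ and the ideal-like structure, I would instead argue directly: for arbitrary $a \in A$,
\[
a^*\langle t(x),t(x)\rangle_A\, a = \langle t(x)a, t(x)a\rangle_A = \langle t(xa), t(xa)\rangle_A \le \|t\|^2 \langle xa, xa\rangle_A = \|t\|^2\, a^*\langle x,x\rangle_A\, a,
\]
which reduces the general case to the case where we test against a state; but this is circular. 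So the cleanest honest approach: recall the standard fact that for $b \in \Li_A(\X)$ one has $\langle b(z), b(z)\rangle_A \le \|b\|^2\langle z,z\rangle_A$ is itself the statement, so I should prove it from scratch using the \emph{module} Cauchy–Schwarz inequality or the positivity trick $\langle b(z),b(z)\rangle_A = \langle z, b^*b(z)\rangle_A$ and then $b^*b \le \|b\|^2\mathrm{id}$ in $\Li_A(\X)$, whence $\|b\|^2\mathrm{id} - b^*b = c^*c$ for some $c \in \Li_A(\X)$, giving $\|t\|^2\langle z,z\rangle_A - \langle b(z),b(z)\rangle_A = \langle z, c^*c(z)\rangle_A = \langle c(z), c(z)\rangle_A \ge 0$.

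So the steps, in order, are: (1) write $\langle t(x),t(x)\rangle_A = \langle x, t^*t(x)\rangle_A$ using adjointability; (2) observe $t^*t \in \Li_A(\X)$ is positive with $\|t^*t\| = \|t\|^2$, so $\|t\|^2\,\mathrm{id}_\X - t^*t \ge 0$ in the C*-algebra $\Li_A(\X)$, hence equals $c^*c$ for some $c \in \Li_A(\X)$ by the functional calculus ($c = (\|t\|^2\mathrm{id} - t^*t)^{1/2}$); (3) apply the inner product against $x$ on both sides and use sesquilinearity plus adjointability again to get $\|t\|^2\langle x,x\rangle_A - \langle t(x),t(x)\rangle_A = \langle x, c^*c(x)\rangle_A = \langle c(x), c(x)\rangle_A$; (4) conclude the right-hand side is a positive element of $A$, finishing the proof. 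The main obstacle is essentially bookkeeping — making sure the positivity of $\langle c(x),c(x)\rangle_A$ as an element of $A$ is invoked correctly (this is an axiom of the Hilbert module inner product) and that $\|t^*t\| = \|t\|^2$ holds in $\Li_A(\X)$, which is the C*-identity in that algebra; both are standard, so this lemma is genuinely routine once the $c^*c$ trick is set up.
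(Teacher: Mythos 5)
Your final argument (steps (1)--(4)) is correct and complete, and it is exactly the standard proof of this fact: the paper does not reprove the lemma but simply cites Proposition 1.2 of \cite{lance_1995}, where the same device $c=(\|t\|^2\op{id}_\X-t^*t)^{1/2}$ in $\Li_A(\X)$ is used to write $\|t\|^2\langle x,x\rangle_A-\langle t(x),t(x)\rangle_A=\langle c(x),c(x)\rangle_A\geq 0$. The exploratory detours earlier in your write-up (the identity $\theta_{t(x),t(x)}=t\theta_{x,x}t^*$ and the conjugation-by-$a$ reduction) are correctly recognized as dead ends and do not affect the validity of the final argument.
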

\begin{proof}
See Proposition 1.2 in \cite{lance_1995}.
\end{proof}

Let $A$ be a C*-algebra, let $\X$ be any right Hilbert 
$A$-module, and let $n \in \Z_{\geq 1}$. 
The direct sum $\X^n$ is usually regarded as a 
right Hilbert $A$-module in an obvious way. 
However, $\X^n$ can also be identified with 
$M_{1,n}(\X)$,
 the row vectors with $n$ entries in $\X$. 
 This identification makes $\X^n$ a right Hilbert $M_n(A)$-module, with
action that comes from the formal matrix multiplication 
$M_{1, n}(\X) \times M_n(A) \to M_{1,n}(\X)$. That is, 
\[
(x_1, \ldots, x_n)\cdot (a_{i,j})_{i,j} = 
\Bigl( \sum_{i=1}^n x_ia_{i,1}, \ldots,\sum_{i=1}^n x_ia_{i,n} \Bigr).
\]
The $M_n(A)$-valued right inner product comes
from the formal matrix multiplication 
$M_{n,1}(\X) \times M_{1, n}(\X) \to M_{n}(A)$. That is, 
\[
\langle (x_1, \ldots, x_n), (y_1, \ldots, y_n)\rangle_{M_n(A)} 
= (\langle x_i, y_j \rangle_A)_{i,j}.
\]
The following result should be well known. We include a complete proof as we 
couldn't find one in the current literature. 
\begin{proposition}\label{M_nAmod}
Let $A$ be a C*-algebra, let $\X$ be any right Hilbert 
$A$-module, and let $n \in \Z_{\geq 1}$. 
For each $t \in \Li_A(\X)$, we define a map $\kappa(t) \colon \X^n \to \X^n$ by
\[
\kappa(t)(x_1, \ldots, x_n)=\bigl(t(x_1), \ldots, t(x_n)\bigr).
\]
Then, $\kappa(t) \in \Li_{M_n(A)}(\X^n)$, and the map $t \mapsto \kappa(t)$ from $\Li_A(\X)$ to $\Li_{M_n(A)}(\X^n)$ 
is a $*$-isomorphism. 
\end{proposition}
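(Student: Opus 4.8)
The plan is to verify the statement in three stages: first that each $\kappa(t)$ is adjointable with the obvious candidate adjoint, then that $\kappa$ is an injective $*$-homomorphism, and finally that it is surjective. For the first stage, given $t\in\Li_A(\X)$ with adjoint $t^*\in\Li_A(\X)$, I claim $\kappa(t)^*=\kappa(t^*)$. This is a direct computation with the $M_n(A)$-valued inner product: for $x=(x_1,\dots,x_n)$ and $y=(y_1,\dots,y_n)$ in $\X^n$,
\[
\langle \kappa(t)(x),y\rangle_{M_n(A)}=\bigl(\langle t(x_i),y_j\rangle_A\bigr)_{i,j}=\bigl(\langle x_i,t^*(y_j)\rangle_A\bigr)_{i,j}=\langle x,\kappa(t^*)(y)\rangle_{M_n(A)}.
\]
So $\kappa(t)\in\Li_{M_n(A)}(\X^n)$ and $\kappa(t)^*=\kappa(t^*)$. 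That $\kappa$ is linear and multiplicative is immediate from the entrywise definition, and combined with $\kappa(t)^*=\kappa(t^*)$ this makes $\kappa$ a $*$-homomorphism. Injectivity is clear: if $\kappa(t)=0$ then $t(x_1)=0$ for every $x_1\in\X$ (take the first coordinate, zeros elsewhere), so $t=0$.

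For surjectivity, let $S\in\Li_{M_n(A)}(\X^n)$; I want to produce $t\in\Li_A(\X)$ with $\kappa(t)=S$. Write $e_k\colon\X\to\X^n$ for the inclusion into the $k$-th coordinate and $p_k\colon\X^n\to\X$ for the $k$-th coordinate projection; both are adjointable as maps of right Hilbert $A$-modules (not $M_n(A)$-modules), with $p_k=e_k^*$. The natural guess is $t:=p_1 S e_1$, but one must check $S$ acts "diagonally and uniformly", i.e. $p_i S e_j=0$ for $i\neq j$ and $p_i S e_i=p_1 S e_1$ for all $i$. The key observation is that $\kappa(t)$ commutes with the natural action of $M_n(A)$ on $\X^n$ — in particular with the matrix units $\varepsilon_{ij}\cdot$, which act as "move coordinate $j$ to coordinate $i$" — precisely because $S$ is $M_n(A)$-linear. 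Conjugating $S$ by these matrix-unit actions forces $p_i S e_i$ to be independent of $i$ and the off-diagonal compressions to respect the pattern; a short computation then gives $\kappa(p_1Se_1)=S$. One also needs $p_1Se_1\in\Li_A(\X)$, which follows since $S$, $e_1$, $p_1$ are all adjointable over the respective algebras and the composition $\X\xrightarrow{e_1}\X^n\xrightarrow{S}\X^n\xrightarrow{p_1}\X$ is adjointable over $A$ with adjoint $p_1S^*e_1$.

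The main obstacle, and the only place requiring genuine care, is the surjectivity argument: one must be precise about the fact that the $M_n(A)$-module structure on $\X^n=M_{1,n}(\X)$ is exactly "formal matrix multiplication", so that the matrix units $\varepsilon_{ij}\in M_n(A)$ (when $A$ is unital; otherwise use an approximate identity or the multiplier algebra) act on $\X^n$ by permuting/relocating coordinates, and that $M_n(A)$-linearity of $S$ translates into the commutation relations that pin down $S$ from its $(1,1)$-compression. Lemma \ref{tx_tx} is not strictly needed for the algebra here but is the kind of estimate one would invoke if one instead wanted to bound $\|\kappa(t)\|$ directly; since $\kappa$ is an injective $*$-homomorphism between C*-algebras it is automatically isometric, so no norm estimate is actually required once the algebraic facts are in place.
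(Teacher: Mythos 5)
Your proof is correct in outline, and its first half (that $\kappa(t)^*=\kappa(t^*)$, and that $\kappa$ is an injective $*$-homomorphism) coincides with the paper's. Where you genuinely diverge is surjectivity. The paper never touches matrix units: it writes $s\in\Li_{M_n(A)}(\X^n)$ in components $s_{i,j}(x)=s_i(\delta_j x)$ and extracts everything from the single identity $\langle s(x),y\rangle_{M_n(A)}=\langle x,s^*(y)\rangle_{M_n(A)}$ evaluated at coordinate vectors $\delta_m x$ and $\delta_l y$: the $(l,l)$ entry forces $s_{l,m}=0$ for $l\neq m$, and the remaining entries give both $s_{i,i}\in\Li_A(\X)$ with $(s_{i,i})^*=(s^*)_{i,i}$ and $s_{i,i}=s_{j,j}$ (since all the $s_{i,i}$ share the adjoint $(s^*)_{j,j}$). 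You instead use that $s$ is an $M_n(A)$-module map and conjugate by matrix-unit actions. That works, but it buys you the extra obligations you already sensed: in the non-unital case you must run the argument with $e_\lambda E_{ij}$ for an approximate identity $(e_\lambda)$ and invoke boundedness of $s$ to pass to the limit, and you should fix the direction of the action (with the paper's convention $(x_1,\ldots,x_n)\cdot(a_{i,j})_{i,j}=(\sum_i x_ia_{i,1},\ldots,\sum_i x_ia_{i,n})$, right multiplication by $aE_{ij}$ relocates coordinate $i$ to slot $j$, the opposite of what you wrote); you also need a separate argument that the compression $p_1se_1$ is adjointable over $A$, which you correctly supply via $A$-adjointability of $s$, whereas the paper gets this for free from the same inner-product identity. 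In short, the paper's route is purely algebraic in the inner product and uniform over unital and non-unital $A$, while yours is the more conceptual ``commutant of matrix units is diagonal and constant'' argument at the cost of an approximate-identity limit. Your closing observations are both accurate: Lemma \ref{tx_tx} plays no role here (it is used only later, in Proposition \ref{L_AX}), and no norm estimate is needed since an injective $*$-homomorphism between C*-algebras is automatically isometric.
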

\begin{proof}
Firstly we show that $\kappa(t) \in \Li_{M_n(A)}(\X^n)$. 
Indeed, an immediate calculation shows that
\[
\langle \kappa(t)(x_1, \ldots, x_n), (y_1, \ldots, y_n) \rangle_{M_n(A)} =
\langle (x_1, \ldots, x_n), \kappa(t^*)(y_1, \ldots, y_n) \rangle_{M_n(A)}.
\]
Therefore, $\kappa(t) \in \Li_{M_n(A)}(\X^n)$ and $\kappa(t)^*=\kappa(t^*)$.
It is now easily checked that $\kappa$ is in fact an injective $*$-homomorphism. 
Thus, to be done, we only need to show that $\kappa$ is surjective. 
We establish some notation first. 
For any $x \in \X$ and any $j \in \{ 1, \ldots, n\}$, we denote by
$\delta_jx$ the element of $\X^n$ with $x$ in the $j$-th coordinate 
and zero elsewhere. Thus, 
\[
(x_1, \ldots, x_n)=\sum_{j=1}^n \delta_jx_j.
\]
Now, take any $s \in \Li_{M_n(A)}(\X^n)$. We have 
linear maps $s_1, \ldots, s_n \colon \X^n \to \X$ such that 
\[
s(x_1, \ldots, x_n)=\bigl( s_1(x_1, \ldots, x_n), \ldots, s_n(x_1, \ldots, x_n)\bigr).
\]
For each $i,j \in \{1, \ldots, n\}$, we define a linear map $s_{i,j} \colon \X \to \X$
by letting $s_{i,j}(x)=s_i(\delta_j x)$ for any $x \in \X$. 
Therefore, 
\[
s(x_1, \ldots, x_n) = \sum_{j=1}^n \bigl( s_{1,j}(x_j), \ldots, s_{n,j}(x_j)\bigr).
\]
Notice that since $s$ is adjointable, we have a map 
$s^* \colon \X^n \to \X^n$, which in turn gives,
for each $i,j \in \{1, \ldots, n\}$, a linear map 
$(s^*)_{i,j} \colon \X \to \X$. 
The equation 
\[
\langle s(x_1, \ldots, x_n) , (y_1, \ldots, y_n) \rangle_{M_{n}(A)} = \langle (x_1, \ldots, x_n) , s^*(y_1, \ldots, y_n) \rangle_{M_{n}(A)},
\]
becomes
\begin{equation}\label{ss_star}
\left(\Bigl\langle \sum_{k=1}^n s_{i,k}(x_k), y_j \Bigr\rangle_A \right)_{i,j}=
\left(\Bigl\langle x_i, \sum_{k=1}^n (s^*)_{j,k}(y_k)\Bigr\rangle_A\right)_{i,j}.
\end{equation}
In particular,  let $l,m \in \{1, \ldots, n\}$
be such that $l \neq m$. Take any $x,y \in \X$ and notice that the $(l,l)$ entry
in (\ref{ss_star}), applied to the elements $\delta_m x \in \X^n$ and $\delta_l y \in \X^n$, 
becomes the equation
\[
\langle s_{l,m}(x), y \rangle_A= \langle 0, (s^*)_{l,l}(y) \rangle_A = 0.
\]
Thus, for each  $l,m \in \{1, \ldots, n\}$
with $l \neq m$, 
we have shown that $s_{l,m}=0$. An analogous 
computation also shows that $(s^*)_{l,m}=0$ when $l \neq m$. Then, (\ref{ss_star}) implies
\[
\langle s_{i,i}(x_i), y_j \rangle_A = \langle x_i , (s^*)_{j,j}(y_j) \rangle_A
\]
for all $i,j \in \{1, \ldots, n\}$. It now follows at once that, for all $i \in \{1, \ldots, n\}$, 
$s_{i,i} \in \Li_A(\X)$ with $(s_{i,i})^*=(s^*)_{i,i}$. Furthermore, 
this also proves that $s_{i,i}=s_{j,j}$ for all $i,j \in \{1, \ldots, n\}$.
It is now clear that $\kappa(s_{i,i})=s$ for any $i \in \{1, \ldots, n\}$, 
which finishes the proof. 
\end{proof}

\begin{remark}\label{Daws_Comment3}
On page 39 of \cite{lance_1995} it is claimed, with no proof, 
that $\Li_A(\X^n) \cong \Li_{M_n(A)}(\X^n)$. 
Proposition \ref{M_nAmod} shows that the claim is false 
in general. Indeed, if $A=\X=\C$ and $n \in \Z_{\geq 2}$, 
then it is clear that $\Li_\C(\C^n) \cong M_n(\C)$. However,
by Proposition \ref{M_nAmod} we have 
$\Li_{M_n(\C)}(\C^n)\cong \Li_{\C}(\C) \cong \C$. 
\end{remark}

\begin{proposition}\label{L_AX}
Let $\X$ be the right Hilbert $A$-module described in Example \ref{mainE} above. 
Suppose that $\X\Hi_0$
is dense in $\Hi_1$. Define $B \subseteq \Li(\Hi_1)$ by 
\[
B=\left\{ b \in \Li(\Hi_1) \colon bx, b^*x \in \X \mbox{ for all } x \in \X)  \right\}.
\]
For each $b \in B$ we get a map $\tau(b) \colon \X \to \X$, given by $\tau(b)(x)=bx$. 
Then, $B$ is $*$-isomorphic to $\mathcal{L}_A(\X)$, via the map that sends $b \in B$
 to $\tau(b)$.
\end{proposition}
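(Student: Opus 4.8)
The plan is to show the map $b \mapsto \tau(b)$ is a well-defined $*$-homomorphism onto $\Li_A(\X)$, and then to establish injectivity using the density of $\X\Hi_0$ in $\Hi_1$. First I would check that $\tau$ is well defined: for $b \in B$, the map $\tau(b)\colon x \mapsto bx$ lands in $\X$ by the definition of $B$, is clearly $A$-linear (since $b(xa) = (bx)a$ as composition of operators), and is adjointable with $\tau(b)^* = \tau(b^*)$ because for $x,y \in \X$ we have $\langle bx, y\rangle_A = (bx)^*y = x^*b^*y = \langle x, b^*y\rangle_A$, and $b^*y \in \X$ again by the definition of $B$. Since both $bx$ and $b^*x$ are required to lie in $\X$ in the definition of $B$, the set $B$ is symmetric under adjoints and is a norm-closed subalgebra of $\Li(\Hi_1)$, hence a concrete C*-algebra; and $\tau$ is visibly linear and multiplicative, so it is a $*$-homomorphism.

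Next I would prove $\tau$ is injective. Suppose $\tau(b) = 0$, i.e. $bx = 0$ in $\Li(\Hi_0,\Hi_1)$ for every $x \in \X$. Then for every $x \in \X$ and $\xi \in \Hi_0$ we get $b(x\xi) = (bx)\xi = 0$. Since $\X\Hi_0$ is dense in $\Hi_1$ and $b$ is bounded, $b = 0$ in $\Li(\Hi_1)$. Thus $\tau$ is injective, hence isometric as a $*$-homomorphism between C*-algebras.

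The main work is surjectivity, and this is the step I expect to be the principal obstacle. Given $t \in \Li_A(\X)$, I need to manufacture a bounded operator $b$ on $\Hi_1$ with $bx = t(x)$ for all $x \in \X$; automatically then $b^*x = t^*(x) \in \X$ as well, so $b \in B$ and $\tau(b) = t$. The natural attempt is to define $b$ on the dense subspace $\X\Hi_0$ by $b(x\xi) := t(x)\xi$ and extend by continuity. For this to work I must show it is well defined and bounded, i.e. that $\bigl\| \sum_{j} t(x_j)\xi_j \bigr\|_{\Hi_1} \leq \|t\|\, \bigl\| \sum_j x_j\xi_j \bigr\|_{\Hi_1}$ for finitely many $x_j \in \X$, $\xi_j \in \Hi_0$. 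The idea is to package the $x_j$ into a single row vector $X = (x_1,\ldots,x_n) \in \X^n \cong M_{1,n}(\X)$ and $\xi = (\xi_1,\ldots,\xi_n)^{\mathrm T} \in \Hi_0^n$, so that $\sum_j x_j\xi_j = X\xi$ where $X \in \Li(\Hi_0^n, \Hi_1)$ is the obvious operator, and $\sum_j t(x_j)\xi_j = \kappa(t)(X)\,\xi$ with $\kappa(t) \in \Li_{M_n(A)}(\X^n)$ from Proposition \ref{M_nAmod}. Now Lemma \ref{tx_tx} applied to $\kappa(t) \in \Li_{M_n(A)}(\X^n)$ gives
\[
\langle \kappa(t)(X), \kappa(t)(X)\rangle_{M_n(A)} \leq \|\kappa(t)\|^2\, \langle X, X\rangle_{M_n(A)} = \|t\|^2\, \langle X, X\rangle_{M_n(A)},
\]
using that $\kappa$ is isometric. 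Translating back via the identification of $M_n(A)$-valued inner products with operator products on $\Hi_0^n$ (exactly as in Proposition \ref{mainEisHM}, viewing everything inside $\Li(\Hi_0^n \oplus \Hi_1)$), this says $(\kappa(t)(X))^*(\kappa(t)(X)) \leq \|t\|^2 X^*X$ as operators on $\Hi_0^n$, and evaluating the induced inequality of quadratic forms at $\xi \in \Hi_0^n$ yields $\|\kappa(t)(X)\xi\|_{\Hi_1}^2 \leq \|t\|^2 \|X\xi\|_{\Hi_1}^2$, which is precisely the estimate needed. Hence $b$ is well defined and extends to a bounded operator on $\overline{\X\Hi_0} = \Hi_1$ with $\|b\| \leq \|t\|$, and by construction $bx = t(x)$ for all $x \in \X$. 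This completes the proof; the one delicate point to get right is the matrix bookkeeping identifying $\sum_j t(x_j)\xi_j$ with $\kappa(t)(X)\xi$ and the passage from the $M_n(A)$-valued inner product inequality to the scalar norm estimate on $\Hi_1$.
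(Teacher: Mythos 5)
Your proposal is correct and follows essentially the same route as the paper: the same well-definedness and injectivity arguments, and for surjectivity the same key estimate obtained by applying Lemma \ref{tx_tx} to $\kappa(t)\in\Li_{M_n(A)}(\X^n)$ via Proposition \ref{M_nAmod} and then evaluating the resulting $M_n(A)$-inequality as a quadratic form on $\Hi_0^n$, before defining $b_t$ on the dense subspace $\X\Hi_0$ and extending by continuity. The only point you gloss (``automatically $b^*x=t^*(x)$'') is handled in the paper by one more short density argument, but it is routine.
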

\begin{proof}
For any $b \in B$
and any $x,y \in \X$, we have 
\[
\langle bx, y \rangle_A = (bx)^*y = x^*(b^*y) = \langle x, b^*y \rangle_A.
\]
Thus, $\tau(b) \in \Li_A(\X)$ and $\tau(b)^*=\tau(b^*)$. It is also easily checked 
that $\tau$ is $*$-homomorphism. Furthermore, it follows from density of $\X\Hi_0$ in $\Hi_1$ 
that $\tau$ is also injective. 

We will finish the proof if we show that $\tau$ is surjective.
Take any $t \in \Li_A(\X)$. We have maps $t\colon \X \to \X$ and $t^*\colon \X \to \X$ satisfying 
\begin{equation}\label{innerX}
t(x)^*y=x^*t^*(y)
\end{equation}
for all $x, y \in \X$. Thus, if $n \in \Z_{\geq 1}$, $x_1, \ldots, x_n \in \X$, and 
$\xi_1, \ldots, \xi_n \in \Hi_0$, then we find, using (\ref{innerX}) at the final step, 
\begin{equation}\label{normSum}
\Bigl\| \sum_{j=1}^n t(x_j)\xi_j \Bigr\|^2 = 
\sum_{j=1}^n\sum_{i=1}^n \bigl\langle t(x_j)\xi_j, t(x_i)\xi_i\bigr\rangle
=  \sum_{j=1}^n\sum_{i=1}^n \bigl\langle x_i^*(t^*t)(x_j)\xi_j, \xi_i\bigr\rangle.
\end{equation}
Recall from Proposition \ref{M_nAmod} that $\X^n$ can be viewed as a 
Hilbert $M_n(A)$-module and 
that $\Li_A(\X) \cong \Li_{M_n(A)}(\X^n) $ via the map $t \mapsto \kappa(t)$. 
Applying Lemma \ref{tx_tx} to $\kappa(t) \in \Li_{M_n(A)}(\X^n)$, we get 
\begin{equation}\label{key_ineq}
(x_i^*(t^*t)(x_j))_{i,j} = (\langle t(x_i), t(x_j) \rangle_A)_{i,j} \leq \|\kappa(t)\|^2 (\langle x_i, x_j \rangle_A)_{i,j}=\|t\|^2 (x_i^*x_j)_{i,j}.
\end{equation}
Therefore, if we let $\bm{\xi}=(\xi_1, \ldots, \xi_n) \in \Hi_0^n$, and 
consider the obvious action of $M_n(A)$ on $\Hi_0^n$, then we get, using 
(\ref{key_ineq}) at the second step,
\begin{align*}
\sum_{j=1}^n\sum_{i=1}^n \bigl\langle x_i^*(t^*t)(x_j)\xi_j, \xi_i\bigr\rangle 
 & = \bigl \langle (x_i^*(t^*t)(x_j))_{i,j}\bm{\xi}, \bm{\xi} \bigr\rangle \\
& \leq \|t\|^2 \bigl \langle (x_i^*x_j)_{i,j}\bm{\xi}, \bm{\xi} \bigr\rangle \\
& = \|t\|^2 \sum_{j=1}^n\sum_{i=1}^n \langle x_j\xi_j, x_i\xi_i \rangle \\
& = \|t\|^2\Bigl\| \sum_{j=1}^n x_j\xi_j \Bigr\|^2.
\end{align*}
This, together with (\ref{normSum}), shows that
\begin{equation}\label{welldefb_t}
\Bigl\| \sum_{j=1}^n t(x_j)\xi_j \Bigr\| \leq \|t\|\Bigl\| \sum_{j=1}^n x_j\xi_j \Bigr\|.
\end{equation}
We can now define $b_t\colon \X\Hi_0 \to \Hi_1$ by letting $b_t(x \xi)=t(x)\xi \in \Hi_1$, 
and extending linearly to all of $\X\Hi_0$. That is, 
\[
b_t\Bigl(\sum_{j=1}^nx_j\xi_j\Bigr) = \sum_{j=1}^n t(x_j)\xi_j.
\] 
Notice that (\ref{welldefb_t}) shows that $b_t$ is well 
defined and that $\|b_t(\eta)\|\leq \|t\|\|\eta\|$, for all 
$\eta \in \X\Hi_0=\op{span}\{ x\xi \colon x \in \X \mbox{ and } \xi \in \Hi_0\}$. 
Thus, we extend $b_t$ by continuity to all of $\Hi_1$,
 and get a well defined map $b_t \in \Li(\Hi_1)$ such that 
 $\|b_t(\eta)\| \leq \|t\| \|\eta \|$ for all $\eta \in \Hi_1$. 
Let $x \in \X$. Since for all $\xi \in \Hi_0$, we have
$(b_tx)\xi = b_t(x\xi)=t(x)\xi$, it follows that 
$b_tx=t(x) \in \X$. Similarly, for any $x,y \in \X$,  we have
 $x^*t(y)=x^*b_ty=(b_t^*x)^*y$ and therefore
 \[
 \langle t^*(x), y\rangle_A =\langle x,t(y)\rangle_A= x^*t(y)
 =(b_t^*x)^*y = \langle b_t^*x, y \rangle_A.
 \]
Hence,  $b_t^*x=t^*(x) \in \X$. 
Thus, $b_t \in B$ and since $\tau(b_t)(x)=b_tx=t(x)$, 
 surjectivity of $\tau$ now follows, finishing the proof. 
\end{proof}

\section{Representations of Hilbert Bimodules and Hilbert Modules}\label{s2}

The main purpose of this section is to state known results 
for representations of Hilbert modules and bimodules. 
Our main contribution here is Proposition \ref{RepforKandL}, where we use Proposition \ref{K_AX} and 
Proposition \ref{L_AX} above to characterize the adjointable and 
the compact-module maps for a representation 
of a right Hilbert module. Such representations are guaranteed to exist
by Corollary \ref{XisMainE} below.
Roughly, this corollary states that for any right Hilbert 
$A$-module $\X$, there are Hilbert spaces $\Hi_0$ and $\Hi_1$ 
and an isometric linear map $\pi_\X \colon \X \to \Li(\Hi_0, \Hi_1)$ 
such that $\pi_\X(\X)$ has the right Hilbert module structure from Example 
\ref{mainE} above. First, we need to recall the concept of Hilbert bimodules
and their representations

\begin{definition}\label{Defbimod}
Let $A$ and $B$ be C*-algebras. A Hilbert $A$-$B$-bimodule is a complex 
vector space $\X$ that is a left Hilbert $A$-module and a right Hilbert $B$-module
(see Notation \ref{HMconv})
such that for all $x,y, z \in \X$,
\begin{equation}\label{bimod}
{}_A\langle x,y \rangle z = x \langle y , z \rangle_B.
\end{equation}
\end{definition}

\begin{remark}\label{AadjX}
For the definition of Hilbert $A$-$B$-bimodule, some authors also require that $A$ acts
on $\X$ via $\langle - , - \rangle_B$-adjointable operators and $B$ acts on $\X$ via 
$_{A}\langle - , - \rangle$-adjointable operators. That is, 
for all $a \in A$, $b \in B$, and $x,y \in \X$ the following holds
$\langle ax,y \rangle_B=\langle x, a^*y\rangle_B$ and ${}_A\langle x,yb \rangle={}_A\langle xb^*,y \rangle$. 
However, this is redundant as it already follows from (\ref{bimod}); see comments after Remark 1.9 in \cite{BMS1994}.
\end{remark}

We now turn to representations of Hilbert bimodules. The following comes mostly from Definition 4.5 
in \cite{Exel1993}.

\begin{definition}\label{RepBimodDef}
Let $\X$ be a Hilbert $A$-$B$-bimodule.
A \emph{representation of $\X$ on a pair of Hilbert spaces $(\Hi_0, \Hi_1)$} 
consists of a triple 
$(\lambda_A, \rho_B, \pi_\X)$,  where $\lambda_A$ is a representation of $A$ 
on $\Hi_1$, $\rho_B$ is a representation of $B$ on 
$\Hi_0$, and $\pi_\X \colon \X \to \Li(\Hi_0, \Hi_1)$ is 
a linear map, such that for all $a \in A, b \in B$, and $x,y \in \X$,
the following compatibility conditions are satisfied.
\begin{enumerate}
\item $\pi_\X(ax)=\lambda_A(a)\pi_\X(x)$, \label{cc1}
\item $\pi_\X(xb)=\pi_\X(x)\rho_B(b)$, \label{cc2}
\item $\lambda_A({}_A\langle x,y \rangle)=\pi_\X(x)\pi_\X(y)^*$, \label{cc3}
\item $\rho_B(\langle x,y \rangle_B)=\pi_\X(x)^*\pi_\X(y)$. \label{cc4}
\end{enumerate} 
If $\pi_\X$ is an isometry, we say the representation $(\lambda_A, \rho_B, \pi_\X)$ is \emph{isometric}. 
\end{definition}

\begin{remark}\label{FaithIsom}
The map $\pi_\X$ in Definition \ref{RepBimodDef} is required to be bounded 
in Definition 4.5 
in \cite{Exel1993}. However, since both $\lambda_A$ and $\rho_B$ are 
$*$-homomorphisms, boundedness of $\pi_\X$ 
follows either from compatibility condition (\ref{cc3}) or (\ref{cc4}). Indeed, for 
instance, compatibility condition (\ref{cc3}) gives
\[
\| \pi_X(x) \|^2 = \| \pi_\X(x)\pi_\X(x)^* \| = \| \lambda_A({}_A\langle x,x \rangle) \|
 \leq \| {}_A\langle x,x \rangle \| = \|x\|^2.
\]
Similarly, Proposition 4.6 in \cite{Exel1993} shows that $(\lambda_A, \rho_B, \pi_\X)$ 
is an isometric representation of a Hilbert $A$-$B$-bimodule $\X$ 
whenever
either $\lambda_A$ or $\rho_B$ is faithful. Indeed, for example, if 
$\rho_B$ is isometric, then by the compatibility condition (\ref{cc4}) we have
\[
\|\pi_\X(x)\|^2 = \|\pi_\X(x)^*\pi_\X(x)\|=\|\rho_B(\langle x,x \rangle_B)\| 
= \| \langle x,x\rangle_B \| = \| x \|^2.
\]
It is also worth mentioning that conditions (\ref{cc1}) and (\ref{cc2}) are actually 
redundant for they respectively follow from  conditions (\ref{cc3}) and (\ref{cc4}).
See Remark 2.2.7 in \cite{Del2023} for the details. 
\end{remark}

The following theorem establishes the existence of representations for any 
Hilbert $A$-$B$-bimodule.

\begin{theorem}\label{BimodRep}
Let $A$ and $B$ be C*-algebras, and let $\X$ be a Hilbert 
$A$-$B$-bimodule. Then, for any nondegenerate representation $\rho_B$ of $B$ on a Hilbert space $\Hi_0$, 
there are a nondegenerate representation $\lambda_A\colon A \to \Li(\Hi_1)$ of $A$ on a Hilbert space 
$\Hi_1$ and a bounded linear map $\pi_\X \colon \X \to \Li(\Hi_0, \Hi_1)$,
such that $(\lambda_A, \rho_B, \pi_\X)$ is a representation of $\X$ on 
$(\Hi_0, \Hi_1)$.
\end{theorem}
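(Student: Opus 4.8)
The plan is to realize $\Hi_1$ as the interior tensor product Hilbert space $\X \otimes_{\rho_B} \Hi_0$ and to take $\pi_\X$ to be the associated ``creation'' operators and $\lambda_A$ the representation induced by the left $A$-action on $\X$. Concretely, I would first form $\Hi_1$ as the Hausdorff completion of the algebraic tensor product of $\X$ and $\Hi_0$, balanced over $B$ (so that $xb \otimes \xi = x \otimes \rho_B(b)\xi$), with respect to the pre-inner product determined by $\langle x \otimes \xi,\, y \otimes \eta \rangle_{\Hi_1} = \langle \xi,\, \rho_B(\langle x,y\rangle_B)\eta\rangle_{\Hi_0}$; its positive semidefiniteness is the standard fact that $(\langle x_i,x_j\rangle_B)_{i,j} \geq 0$ in $M_n(B)$. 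Then I would define $\pi_\X(x)\colon \Hi_0 \to \Hi_1$ by $\pi_\X(x)\xi = x\otimes\xi$; the estimate $\|x\otimes\xi\|^2 = \langle \xi, \rho_B(\langle x,x\rangle_B)\xi\rangle \leq \|\langle x,x\rangle_B\|\,\|\xi\|^2 = \|x\|^2\|\xi\|^2$ shows that $\pi_\X$ is a well-defined contractive linear map.

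For $\lambda_A$ I would use that, since $\X$ is a Hilbert $A$-$B$-bimodule, $A$ acts on the right Hilbert $B$-module $\X$ by $\langle-,-\rangle_B$-adjointable operators with $(a\,\cdot\,)^*=a^*\,\cdot\,$ (Remark \ref{AadjX}). Applying the matrix-amplification inequality used in the proof of Proposition \ref{L_AX} (that is, Lemma \ref{tx_tx} together with Proposition \ref{M_nAmod}, now for the adjointable operator $x \mapsto ax$ on $\X$) shows that $x\otimes\xi \mapsto (ax)\otimes\xi$ extends to a bounded operator $\lambda_A(a)\in\Li(\Hi_1)$, and that $a\mapsto\lambda_A(a)$ is a $*$-homomorphism (multiplicativity is immediate on elementary tensors, and $\lambda_A(a)^*=\lambda_A(a^*)$ follows from $\langle ax,y\rangle_B=\langle x,a^*y\rangle_B$). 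Next I would verify the compatibility conditions (\ref{cc1})--(\ref{cc4}) of Definition \ref{RepBimodDef} by evaluating on elementary tensors: (\ref{cc1}) and (\ref{cc2}) are immediate from the definitions of $\lambda_A$ and $\pi_\X$ and the $B$-balancing; for the adjoint identities one computes $\pi_\X(y)^*(w\otimes\eta)=\rho_B(\langle y,w\rangle_B)\eta$ directly from the inner product formula, which yields (\ref{cc4}) at once and yields (\ref{cc3}) after invoking the bimodule relation (\ref{bimod}) in the form $x\langle y,w\rangle_B = {}_A\langle x,y\rangle w$. Since all operators involved are bounded and elementary tensors are total in $\Hi_1$, these identities hold globally.

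The one genuinely nonformal point, and the step I expect to be the main obstacle, is arranging $\lambda_A$ to be \emph{nondegenerate} without shrinking $\Hi_1$ (shrinking $\Hi_0$ is not an option, since $\rho_B$ is prescribed); equivalently, one must show $\cj{A\X}=\X$. For this I would invoke the module factorization fact $\cj{\X\cdot I}=\X$, where $I=\cj{\op{span}\{\langle x,y\rangle_B : x,y\in\X\}}$ (see \cite{lance_1995}): combined with the bimodule relation (\ref{bimod}), each $x\langle y,z\rangle_B = {}_A\langle x,y\rangle z$ lies in $A\X$, so $\X = \cj{\X\cdot I} \subseteq \cj{A\X} \subseteq \X$. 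Granting this, every $x\otimes\xi$ is approximated by elements of $\op{span}\{(ax)\otimes\xi\} = \lambda_A(A)\,\Hi_1$, so $\cj{\lambda_A(A)\Hi_1}=\Hi_1$ and $\lambda_A$ is nondegenerate. Assembling the three pieces then gives the desired representation $(\lambda_A,\rho_B,\pi_\X)$ on $(\Hi_0,\Hi_1)$.
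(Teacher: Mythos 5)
Your proposal is correct, but it does not follow the route the paper takes for this particular statement: the paper proves Theorem \ref{BimodRep} by citation to Proposition 4.7 of \cite{Exel1993}, whose argument runs through the linking algebra of $\X$ (represent the linking algebra $L$ on a Hilbert space containing $\Hi_0$, take $\Hi_1$ to be the closure of $\pi(\iota(\X))\Hi_0$, and restrict; see Remark \ref{Daws_Comment4}). Your construction --- $\Hi_1 = \X \otimes_{\rho_B} \Hi_0$, $\pi_\X(x)\xi = x \otimes \xi$, $\lambda_A$ induced by the adjointable left action, with condition (\ref{cc3}) recovered from the annihilation-operator formula plus the bimodule relation ${}_A\langle x,y\rangle w = x\langle y,w\rangle_B$ --- is instead exactly the paper's own later reproof of this theorem, namely Theorem \ref{BimodRepInCorrespRep}, which is obtained by specializing the C*-correspondence argument of Theorem \ref{CorrespRep}. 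The trade-off is the one the paper itself emphasizes: Exel's linking-algebra proof needs the $A$-valued left inner product and so does not generalize to correspondences, whereas the tensor-product proof gives an explicit formula for $\pi_\X$ and extends verbatim to the $(A,B)$ C*-correspondence setting. Your handling of nondegeneracy of $\lambda_A$ (via $\cj{\X\langle\X,\X\rangle_B} = \X$ and the bimodule relation, so $\cj{A\X} = \X$) is the same fact the paper invokes, there phrased as ``$A$ acts nondegenerately on $\X$'' together with Proposition 2.66 of \cite{raeburn_williams_1998}; all steps check out.
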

\begin{proof}
See Proposition 4.7 in \cite{Exel1993}.
\end{proof}

\begin{corollary}\label{RepBimodIsom}
Let $A$ and $B$ be C*-algebras, and let $\X$ be a Hilbert $A$-$B$-bimodule.
Then there is an isometric representation $(\lambda_A, \rho_B, \pi_\X)$ of $\X$ on 
some pair of Hilbert spaces $(\Hi_0, \Hi_1)$. 
\end{corollary}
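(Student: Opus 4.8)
The plan is to deduce Corollary \ref{RepBimodIsom} from Theorem \ref{BimodRep} by simply choosing the representation $\rho_B$ of $B$ to be \emph{faithful} (and nondegenerate), and then invoking the isometry criterion recorded in Remark \ref{FaithIsom}. Concretely, first I would recall that every C*-algebra $B$ admits a faithful nondegenerate representation on some Hilbert space $\Hi_0$: apply the universal (direct sum of GNS) representation to get a faithful $*$-representation, and then restrict to the essential subspace $\cj{\rho_B(B)\Hi_0}$ to arrange nondegeneracy without destroying faithfulness. (If one prefers, one can also cite that the GNS representations separate points and take a suitable direct sum.)

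Next I would feed this $\rho_B$ into Theorem \ref{BimodRep}: this produces a Hilbert space $\Hi_1$, a nondegenerate representation $\lambda_A \colon A \to \Li(\Hi_1)$, and a bounded linear map $\pi_\X \colon \X \to \Li(\Hi_0,\Hi_1)$ such that $(\lambda_A, \rho_B, \pi_\X)$ is a representation of $\X$ on $(\Hi_0,\Hi_1)$ in the sense of Definition \ref{RepBimodDef}. It then remains to observe that $\pi_\X$ is isometric. This is exactly the computation in Remark \ref{FaithIsom}: using compatibility condition (\ref{cc4}) together with the fact that $\rho_B$ is isometric (being faithful, hence isometric as a $*$-homomorphism),
\[
\|\pi_\X(x)\|^2 = \|\pi_\X(x)^*\pi_\X(x)\| = \|\rho_B(\langle x,x\rangle_B)\| = \|\langle x,x\rangle_B\| = \|x\|^2,
\]
for every $x \in \X$. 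Hence $(\lambda_A,\rho_B,\pi_\X)$ is an isometric representation of $\X$ on $(\Hi_0,\Hi_1)$, which is the claim.

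Since all the substantive work is already contained in Theorem \ref{BimodRep} and Remark \ref{FaithIsom}, there is essentially no obstacle here; the only point requiring a moment's care is the very first one, namely producing a representation of $B$ that is \emph{simultaneously} faithful and nondegenerate. The cleanest way is to take the universal representation and pass to its essential part, noting that a faithful representation remains faithful after cutting down to the essential subspace because the part of $B$ acting as $0$ on that subspace is a two-sided ideal that meets the kernel trivially, forcing it to be zero. Alternatively, when $B$ has a bounded approximate identity (as every C*-algebra does), any nondegenerate summand of the universal representation already works. With such a $\rho_B$ in hand, the rest is immediate.
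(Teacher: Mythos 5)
Your proposal is correct and follows essentially the same route as the paper: take the universal representation of $B$ (which is faithful and nondegenerate — no extra passage to the essential subspace is actually needed, since a direct sum of cyclic GNS representations is already nondegenerate), apply Theorem \ref{BimodRep}, and conclude isometry from Remark \ref{FaithIsom}.
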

\begin{proof}
Let $\rho_B \colon B \to \Li(\Hi_0)$ be the universal representation of $B$. 
Then, $\rho_B$ is faithful and nondegenerate. Hence, this follows at once from 
Theorem \ref{BimodRep} and Remark \ref{FaithIsom}. 
\end{proof}

We now present the definition for a representation of a right Hilbert module, 
which comes 
from looking at the conditions in Definition \ref{RepBimodDef} that
only deal with the right action and right inner product. 

\begin{definition}\label{RepMod}
Let $A$ be a C*-algebra and let $\X$ be a right Hilbert $A$-module.
A \emph{representation of $\X$ on a pair of Hilbert spaces $(\Hi_0, \Hi_1)$}
consists of a pair $(\rho_A, \pi_\X)$
such that $\rho_A$ is a representation of $A$ 
on $\Hi_0$, and $\pi_\X \colon \X \to \Li(\Hi_0, \Hi_1)$ 
is a linear map, such that for all $a \in A$, and all $x,y \in \X$,
the following compatibility conditions are satisfied.
\begin{enumerate}
\item $\pi_\X(xa)=\pi_\X(x)\rho_A(a)$, \label{cm1}
\item $\rho_A(\langle x,y \rangle_A)=\pi_X(x)^*\pi_X(y)$. \label{cm2}
\end{enumerate} 
If $\pi_\X$ is an isometry, we say the representation $(\rho_A, \pi_\X)$ is \emph{isometric}. 
\end{definition}

The map $\pi_\X$ in Definition \ref{RepMod} is always bounded
and this follows exactly as in Remark \ref{FaithIsom}. Similarly, 
faithfulness of $\rho_A$ is sufficient for  
$(\rho_A, \pi_\X)$ to be isometric. Also, as mentioned 
by the end of Remark \ref{FaithIsom}, we point out that condition (\ref{cm1}) in 
Definition \ref{RepMod} is actually implied by  condition (\ref{cm2}).

The following result establishes the existence of (isometric) representations for right Hilbert modules. 
\begin{corollary}\label{XisMainE}
Let $A$ be a C*-algebra and let $\X$ be a right Hilbert $A$-module. Then, for 
any nondegenerate representation $\rho_A$ of $A$ on a Hilbert space $\Hi_0$,
there are a Hilbert space $\Hi_1$ and a linear map $\pi_\X \colon \X \to \Li(\Hi_0, \Hi_1)$
such that $(\rho_A, \pi_\X)$ is a representation of $\X$ on $(\Hi_0, \Hi_1)$ as in Definition \ref{RepMod}. 
Furthermore, if $\rho_A$ is faithful, then $(\rho_A, \pi_\X)$ is isometric and in this case
$\pi_\X(\X)$ has the right Hilbert $\rho_A(A)$-module structure from Example 
\ref{mainE}.
\end{corollary}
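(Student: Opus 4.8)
The plan is to mimic the Hilbert bimodule construction from Exel's Theorem \ref{BimodRep}, but keeping only the right-hand structure, since a right Hilbert $A$-module is exactly a Hilbert bimodule with the left side stripped away; concretely, I would build $\Hi_1$ as a suitable completion of $\X \odot_A \Hi_0$ and define $\pi_\X$ by $\pi_\X(x)(\xi) = x \otimes \xi$.

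First I would form the algebraic tensor product $\X \odot \Hi_0$ and equip it with the sesquilinear form determined by $\langle x \otimes \xi, y \otimes \eta \rangle = \langle \xi, \rho_A(\langle x, y \rangle_A)\eta \rangle_{\Hi_0}$, which is positive semidefinite by the usual Hilbert-module argument (the matrix $(\langle x_i, x_j\rangle_A)_{i,j}$ is positive in $M_n(A)$, hence maps to a positive operator on $\Hi_0^n$). Quotienting by the null space and completing yields a Hilbert space $\Hi_1$, the interior tensor product $\X \otimes_{\rho_A} \Hi_0$. For each $x \in \X$ the map $\xi \mapsto x \otimes \xi$ extends to a bounded operator $\pi_\X(x) \in \Li(\Hi_0, \Hi_1)$, with $\|\pi_\X(x)\| \le \|x\|$ because $\langle \pi_\X(x)\xi, \pi_\X(x)\xi\rangle = \langle \xi, \rho_A(\langle x,x\rangle_A)\xi\rangle \le \|\langle x,x\rangle_A\|\,\|\xi\|^2 = \|x\|^2\|\xi\|^2$. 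Linearity of $x \mapsto \pi_\X(x)$ is clear from the construction.

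Next I would verify the two compatibility conditions of Definition \ref{RepMod}. Condition \eqref{cm1} is immediate: $\pi_\X(xa)(\xi) = (xa)\otimes\xi = x \otimes \rho_A(a)\xi = \pi_\X(x)\rho_A(a)\xi$. For condition \eqref{cm2}, I compute, for $\xi, \eta \in \Hi_0$ and $x, y \in \X$,
\[
\langle \xi, \pi_\X(x)^*\pi_\X(y)\eta\rangle_{\Hi_0} = \langle \pi_\X(x)\xi, \pi_\X(y)\eta\rangle_{\Hi_1} = \langle x\otimes\xi, y\otimes\eta\rangle = \langle \xi, \rho_A(\langle x,y\rangle_A)\eta\rangle_{\Hi_0},
\]
so $\pi_\X(x)^*\pi_\X(y) = \rho_A(\langle x,y\rangle_A)$, which is \eqref{cm2}. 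This already establishes that $(\rho_A, \pi_\X)$ is a representation. For the ``furthermore'' clause, I would first observe that \eqref{cm2} with $y=x$ gives $\|\pi_\X(x)\|^2 = \|\pi_\X(x)^*\pi_\X(x)\| = \|\rho_A(\langle x,x\rangle_A)\|$, which equals $\|\langle x,x\rangle_A\| = \|x\|^2$ when $\rho_A$ is faithful (hence isometric on the C*-algebra $A$); so $\pi_\X$ is isometric. Then, setting $\W = \pi_\X(\X) \subseteq \Li(\Hi_0, \Hi_1)$ and $\rho_A(A) \subseteq \Li(\Hi_0)$, conditions \eqref{cm1} and \eqref{cm2} say precisely that $\W\rho_A(A) \subseteq \W$ and $\W^*\W \subseteq \rho_A(A)$ (closedness of $\W$ follows since $\pi_\X$ is an isometry onto the complete space $\X$), which are exactly the hypotheses of Example \ref{mainE}; moreover the inner product of Example \ref{mainE}, namely $\pi_\X(x)^*\pi_\X(y)$, agrees under $\pi_\X$ with $\langle x,y\rangle_A$ by \eqref{cm2}, and the module norms agree by Proposition \ref{mainEisHM}.

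The main obstacle is the well-definedness and positivity of the inner product on $\X \otimes_{\rho_A} \Hi_0$ — that is, showing the form is positive semidefinite so that the quotient-completion makes sense. This is the standard interior tensor product construction, and the crux is the $M_n(A)$-positivity of $(\langle x_i, x_j\rangle_A)_{i,j}$ together with the fact that $\rho_A$, being a $*$-homomorphism, sends this positive matrix to a positive operator on $\Hi_0^{\,n}$; this is routine but must be stated carefully. Everything else is a short computation. Since the construction is well documented (it is the $B$-side of Exel's proof of Theorem \ref{BimodRep}), I would likely present it compactly, perhaps even citing the interior tensor product construction and focusing the written proof on verifying \eqref{cm1}, \eqref{cm2}, and the identification with Example \ref{mainE}.
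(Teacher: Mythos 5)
Your proof is correct, but it takes a genuinely different route from the paper's own proof of this corollary. The paper disposes of Corollary \ref{XisMainE} in one line: a right Hilbert $A$-module is canonically a Hilbert $\mathcal{K}_A(\X)$-$A$-bimodule, so the statement follows from Exel's Theorem \ref{BimodRep} (whose proof goes through the linking algebra), with the isometric clause coming from Remark \ref{FaithIsom}. You instead build $\Hi_1 = \X \otimes_{\rho_A} \Hi_0$ directly as an interior tensor product and define $\pi_\X(x)\xi = x \otimes \xi$, verifying conditions (\ref{cm1}) and (\ref{cm2}) by hand. This is precisely the Fock-type construction the paper itself uses later to prove the more general Theorem \ref{CorrespRep}, specialized to the case where the left action is forgotten; the paper even flags (in the remark following the corollary) that proofs not routed through Theorem \ref{BimodRep} exist, citing Zettl and Murphy. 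What your route buys: it is self-contained (no linking algebra, no left $\mathcal{K}_A(\X)$-structure needed), it yields the explicit formula $\pi_\X(x)\xi = x\otimes\xi$ together with the automatic density of $\pi_\X(\X)\Hi_0$ in $\Hi_1$, and in fact it never uses nondegeneracy of $\rho_A$, so it proves a marginally stronger statement. What the paper's route buys is brevity and reuse of an already-cited result. Your verification of the ``furthermore'' clause --- isometry from faithfulness, closedness of $\pi_\X(\X)$, and the identification with Example \ref{mainE} via $\pi_\X(\X)\rho_A(A) \subseteq \pi_\X(\X)$ and $\pi_\X(\X)^*\pi_\X(\X) \subseteq \rho_A(A)$ --- is complete and matches what the paper obtains from Remark \ref{FaithIsom}.
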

\begin{proof}
It is well known that a right Hilbert $A$-module $\X$ is also a Hilbert $\mathcal{K}_A(\X)$-$A$-bimodule. 
Hence the desired result follows at once from Theorem \ref{BimodRep}. 
The isometric part of the statement follows from Remark \ref{FaithIsom}.
\end{proof}
\begin{remark}
There are at least two different approaches in the current literature
to prove Corollary \ref{XisMainE} that 
do not depend on Theorem \ref{BimodRep}. Indeed, the first one is to take
$\pi_{\X}$ to be the restriction to $\X$ of the map $U$ from Theorem 2.6 in \cite{Zettl1983}. 
We are thankful to Julian Kranz for pointing out this reference to us. 
The second one, uses Murphy's theory on positive definite kernels for Hilbert modules developed in \cite{murphy_1997}, in which $\pi_\X$ comes from a Kolmogorov decomposition 
of the positive definite map $\X \times \X \to \Li(\Hi_0)$ given by $(x,y) \mapsto \rho_A(\langle x, y \rangle_A)$. 
The details of this approach are in Theorem 3.1 in \cite{murphy_1997}, we are thankful to the 
referee for pointing out this result. 
\end{remark}

We end this section by observing that our main results from Section \ref{s1} 
can be stated using the language of Definition \ref{RepMod}.

\begin{proposition}\label{RepforKandL}
Let $A$ be a C*-algebra, let $\X$ be any right Hilbert $A$-module, 
and let $(\rho_A, \pi_\X)$ be a representation of $\X$ 
on $(\Hi_0, \Hi_1)$, with $\rho_A$ faithful.  
Suppose that $\pi_\X(\X)\Hi_0$ is dense in $\Hi_1$. Then, the 
C*-algebras  $\mathcal{K}_A(\X)$ and $\mathcal{L}_A(\X)$ can be represented 
on $\Hi_1$ via the maps described below. 
\begin{enumerate}
\item There is a $*$-isomorphism from $\mathcal{K}_A(\X)$ to
\[
 \cj{\op{span} \left\{ \pi_\X(x)\pi_\X(y)^* \colon x,y \in \X \right\}} \subseteq \Li(\Hi_1),
\] 
which sends $\theta_{x,y}$ to $\pi_\X(x)\pi_\X(y)^*$ for $x,y \in \X$.\label{RCL1}
\item We define  
\[
B=\left\{ b \in \Li(\Hi_1) \colon b\pi_\X(x), b^*\pi_\X(x) \in \pi_\X(\X)  \mbox{ for all }  x \in \X \right\}.
\]
For each $b \in B$ we get a map $\tau(b) \colon \pi_\X(\X) \to \pi_\X(\X)$, 
which sends $\pi_\X(x)$ to $\tau(b)(\pi_\X(x))=b\pi_\X(x)$. 
Then, $B$ is $*$-isomorphic to $\mathcal{L}_A(\X)$, via the map that sends $b \in B$
 to $\pi_\X^{-1} \circ \tau(b)\circ \pi_\X$, where $\pi_\X^{-1}$ 
 is interpreted as the inverse of the linear bijection $\pi_\X \colon \X \to \pi_\X(\X)$. \label{RCL2}
\end{enumerate}
\end{proposition}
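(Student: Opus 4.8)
The plan is to deduce the proposition from Propositions \ref{K_AX} and \ref{L_AX} by transporting everything along the isometric isomorphism $\pi_\X$. First I would note that since $\rho_A$ is faithful, the representation $(\rho_A,\pi_\X)$ is isometric (as observed right after Definition \ref{RepMod}), so $\pi_\X\colon \X\to \pi_\X(\X)$ is an isometric linear bijection and $\pi_\X(\X)$ is a closed subspace of $\Li(\Hi_0,\Hi_1)$. Put $A_0:=\rho_A(A)\subseteq \Li(\Hi_0)$. Compatibility condition (\ref{cm1}) gives $\pi_\X(x)a_0=\pi_\X(x)\rho_A(a)=\pi_\X(xa)\in\pi_\X(\X)$ for $a_0=\rho_A(a)\in A_0$, and condition (\ref{cm2}) gives $\pi_\X(x)^*\pi_\X(y)=\rho_A(\langle x,y\rangle_A)\in A_0$. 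Hence $\pi_\X(\X)$ is precisely a concrete Hilbert module over $A_0$ as in Example \ref{mainE}, and by Proposition \ref{mainEisHM} its module norm coincides with the operator norm; combined with isometry of $\pi_\X$, this makes $\pi_\X\colon\X\to\pi_\X(\X)$ an isometric isomorphism of right Hilbert modules that intertwines the $A$-action with the $A_0$-action and the $A$-valued inner product with the $A_0$-valued one, in both cases along $\rho_A$.

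Next I would record the routine fact that such an isometric Hilbert-module isomorphism induces a $*$-isomorphism $\mathcal{K}_A(\X)\to\mathcal{K}_{A_0}(\pi_\X(\X))$ sending $\theta_{x,y}$ to $\theta_{\pi_\X(x),\pi_\X(y)}$, and a $*$-isomorphism $\mathcal{L}_A(\X)\to\mathcal{L}_{A_0}(\pi_\X(\X))$ sending $t$ to $\pi_\X\circ t\circ\pi_\X^{-1}$, with inverse $s\mapsto\pi_\X^{-1}\circ s\circ\pi_\X$. Verifying this is the only place that requires any argument, but it is entirely formal: one uses that $\pi_\X$ is a bijective module map, that $\rho_A$ is injective so that $\langle t(x),y\rangle_A=x^*t^*(y)$-type identities pass back and forth through $\rho_A$, and that $\pi_\X$ is isometric so norms and closures are preserved.

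Finally I would invoke the two concrete results. For item (\ref{RCL1}), Proposition \ref{K_AX} applied to the module $\pi_\X(\X)$ over $A_0$ — whose hypothesis is exactly our assumption that $\pi_\X(\X)\Hi_0$ is dense in $\Hi_1$ — yields a $*$-isomorphism from $\mathcal{K}_{A_0}(\pi_\X(\X))$ onto $\cj{\op{span}\{\pi_\X(x)\pi_\X(y)^*\colon x,y\in\X\}}\subseteq\Li(\Hi_1)$ taking $\theta_{\pi_\X(x),\pi_\X(y)}$ to $\pi_\X(x)\pi_\X(y)^*$; composing with the isomorphism of the previous paragraph gives the asserted map on $\mathcal{K}_A(\X)$. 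For item (\ref{RCL2}), the set $B$ in the statement is literally the set $B$ of Proposition \ref{L_AX} for the module $\pi_\X(\X)$ over $A_0$, and the map $\tau$ here is the corresponding $\tau$ there; Proposition \ref{L_AX} then gives a $*$-isomorphism $B\to\mathcal{L}_{A_0}(\pi_\X(\X))$, $b\mapsto\tau(b)$, and composing with the inverse isomorphism $s\mapsto\pi_\X^{-1}\circ s\circ\pi_\X$ produces $b\mapsto\pi_\X^{-1}\circ\tau(b)\circ\pi_\X$, as claimed. All of the analytic content is already packaged in Propositions \ref{K_AX} and \ref{L_AX}, so no further estimates are needed.
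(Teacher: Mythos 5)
Your proposal is correct and is essentially the paper's own argument: the paper likewise notes that faithfulness of $\rho_A$ makes $\pi_\X$ isometric, identifies $\pi_\X(\X)$ with the concrete module of Example \ref{mainE} over $\rho_A(A)$, and then invokes Propositions \ref{K_AX} and \ref{L_AX}. You merely spell out the (routine) transport of $\mathcal{K}_A(\X)$ and $\mathcal{L}_A(\X)$ along this identification in more detail than the paper does.
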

\begin{proof}
Since $\rho_A$ is faithful, $\pi_\X$ is isometric. The result now 
follows immediately after replacing $A$ with its isometric copy $\rho_A(A)$ 
and $\X$ with its isometric copy $\pi_\X(\X)$ on  
Proposition \ref{K_AX} for part (\ref{RCL1}), and on Proposition \ref{L_AX} for part (\ref{RCL2}). 
\end{proof}

\section{Representations of C*-Correspondences}\label{s3}

In this section we define representations of C*-correspondences and 
present the main result of this paper, Theorem \ref{CorrespRep}, 
which we will see is actually a generalization of Theorem \ref{BimodRep}. 
We then give two applications of this theorem. 
The first, contained in Theorem \ref{Suff_Nec_Cond_Corres_is_Bimod} and Corollary \ref{COR_Suff_Nec_Cond_Corres_is_Bimod}, 
gives necessary and sufficient conditions for a general $(A,B)$ C*-correspondence 
to admit a Hilbert $A$-$B$-bimodule structure. The second, given in Theorem \ref{TensorCorres}, 
shows that the interior tensor product of correspondences admits a representation 
as the product of suitable representations of the factors. 

\subsection{Definitions}

We start by recalling the definition of a C*-correspondence. 

\begin{definition}
Let $A$ and $B$ be C*-algebras. An \emph{$(A,B)$ C*-correspondence} is a pair $(\X, \varphi)$, where
$\X$ is a right Hilbert $B$-module and $\varphi \colon A \to \Li_B(\X)$ is a $*$-homomorphism.
We say that $A$ acts \emph{nondegenerately} on $\X$ whenever $\varphi(A)\X$ is dense in $\X$.
\end{definition}

We observe that Remark \ref{AadjX} implies that any Hilbert $A$-$B$-bimodule, as in Definition \ref{Defbimod}, is in fact an 
$(A,B)$ C*-correspondence with $\varphi$ given by the left action
of the bimodule. In fact, if $\X$ is  Hilbert $A$-$B$-bimodule, 
then it is well known that $A$ always acts nondegenerately on $\X$. 
However, not every C*-correspondence is a Hilbert bimodule. 
Thus, C*-correspondences are a generalization of Hilbert bimodules. 
Our goal is then to find a general version of Theorem \ref{BimodRep}
that works for the general C*-correspondence setting. For this,
we need first to define what we mean by representations of C*-correspondences. 

\begin{definition}\label{DefCorrespRep}
Let $A$ and $B$ be C*-algebras, and let $(\X, \varphi)$ be an $(A,B)$ C*-correspondence. 
A \emph{representation of $(\X, \varphi)$ on a pair of Hilbert spaces $(\Hi_0, \Hi_1)$} consists 
of a triple $(\lambda_A, \rho_B, \pi_\X)$ where $\lambda_A$ is 
a representation of $A$ on $\Hi_1$,  $\rho_B$ is a representation of $B$ on $\Hi_0$, and 
 $\pi_\X \colon \X \to \Li(\Hi_0, \Hi_1)$ is a linear map,
such that for all $a \in A$, and all $x,y \in \X$, the following 
compatibility conditions are satisfied.
\begin{enumerate}
\item $\pi_\X(\varphi(a)x)=\lambda_A(a)\pi_\X(x)$, \label{cco1}
\item $\pi_\X(xb)=\pi_\X(x)\rho_B(b)$, \label{cco2}
\item $\rho_B(\langle x,y \rangle_B)=\pi_\X(x)^*\pi_\X(y)$. \label{cco3}
\end{enumerate} 
If $\pi_\X$ is an isometry, we say the representation $(\lambda_A, \rho_B, \pi_\X)$ is \emph{isometric}. 
\end{definition}

As in Remark \ref{FaithIsom}, the linear map $\pi_\X$
from Definition \ref{DefCorrespRep} is automatically bounded and 
faithfulness of $\rho_B$ is sufficient for  
$(\lambda_A, \rho_B, \pi_\X)$ to be isometric. 
Similarly, condition (\ref{cco2}) in Definition \ref{DefCorrespRep}
is automatically implied by condition (\ref{cco3}).

We point out that Definition \ref{DefCorrespRep} agrees with the 
definitions of representations of C*-correspondences in the literature. Indeed, 
suppose that $(\X, \varphi)$ is an $(A, A)$ C*-correspondence and that $(\lambda_A, \rho_A, \pi_\X)$
is a representation of  $(\X, \varphi)$ as in Definition \ref{DefCorrespRep} with $\Hi_0=\Hi_1$ and $\lambda_A = \rho_A$.
Then, $(\lambda_A, \pi_\X)$
is a representation of  $(\X, \varphi)$ on $\Li( \Hi_0)$ in the sense of Definition 2.1 in \cite{kat2004} 
and an isometric covariant representation of  $(\X, \varphi)$ on $\Hi_0$ in the sense of Definition 2.11 in \cite{MuhlySolel1998}. 
More generally, suppose that $(\X, \varphi)$ is an $(A, B)$ C*-correspondence and that $(\lambda_A, \rho_B, \pi_\X)$
is a representation of  $(\X, \varphi)$ on $(\Hi_0, \Hi_1)$ as in Definition \ref{DefCorrespRep}. 
Then, letting $C= \Li(\Hi_0 \oplus \Hi_1)$, we get obvious maps $\widehat{\lambda_A}\colon A \to C$,  $\widehat{\rho_B}\colon B \to C$, and  $\widehat{\pi_\X}\colon  \X \to C$ induced by $\lambda_A, \rho_B$, and $\pi_\X$. 
It is clear that $(\widehat{\lambda_A}, \widehat{\rho_B}, \widehat{\pi_\X})$ is a rigged representation of $(\X, \varphi)$ on $C$ in the sense of Definition 3.7 in \cite{CDE2022CK}.

\subsection{Interior tensor product of C*-correspondences.}

For the rest of this section we will need the interior tensor product of C*-correspondences.
This is a well known construction.
We only list some of the basic properties that will be needed below. 
We refer the reader to Proposition 4.5 in \cite{lance_1995} and
the afterwards discussion for more details.
Let $A$, $B$, and $C$ be C*-algebras, let  $(\X, \varphi_\X)$ 
be an $(A,B)$ C*-correspondence and let $(\Y, \varphi_\Y)$
be a $(B, C)$ C*-correspondence. We consider 
 the algebraic $B$-balanced tensor product of modules $\X \odot_B \Y$
 which has a $C$-valued right pre-inner product given
 on elementary tensors by
\begin{equation}\label{innerProd}
\langle x_1 \otimes y_1, x_2 \otimes y_2 \rangle_{C} =
 \langle y_1, \varphi_\Y(\langle x_1, x_2 \rangle_B)y_2\rangle_C.
\end{equation}
The completion of $\X \odot_B \Y$ under the norm induced by 
the $C$-valued right pre-inner product from equation (\ref{innerProd}) is a right Hilbert $C$-module, 
which we denote by $\X \otimes_{\varphi_\Y} \Y$. 
It is useful to keep in mind that, 
by construction, if $x \in \X$, $b \in B$, and $y \in \Y$, then 
\begin{equation}\label{modaction}
xb \otimes y = x \otimes \varphi_\Y(b)y.
\end{equation}
Furthermore, $A$ acts on $\X \otimes_{\varphi_\Y} \Y$
via $\langle - , -\rangle_{C}$-adjointable operators 
and the action $\widetilde{\varphi_\X} \colon  A \to \Li_C(\X \otimes_{\varphi_\Y} \Y)$
is determined by $\varphi_\X$
as follows:
\begin{equation}\label{leftactionTensor}
\widetilde{\varphi_\X}(a)(x \otimes y) = \varphi_\X(a)x \otimes y,
\end{equation}
for $a \in A$, $x \in \X$, and $y \in \Y$. 
All this makes $(\X \otimes_{\varphi_\Y}\Y, \widetilde{\varphi_\X})$ into 
an $(A,C)$ C*-correspondence, called the \emph{interior tensor product}
of $(\X, \varphi_\X)$ with $(\Y, \varphi_\Y)$. 

\subsection{Main results.}
For any two C*-algebras $A$ and $B$, the following result
shows how to produce a representation of any 
$(A,B)$ C*-correspondence out of any nondegenerate representation of $B$. 

\begin{theorem}\label{CorrespRep}
Let $A$ and $B$ be C*-algebras and let $(\X, \varphi)$ be an ($A$,$B$) C*- correspondence.
Then, for any nondegenerate representation $\rho_B$ of $B$ on a Hilbert space $\Hi_0$, 
there are a representation $\lambda_A\colon A \to \Li(\Hi_1)$ of $A$ on a Hilbert space 
$\Hi_1$ and a bounded linear map $\pi_\X \colon \X \to \Li(\Hi_0, \Hi_1)$,
such that $(\lambda_A, \rho_B, \pi_\X)$ is a representation of $(\X, \varphi)$ on 
$(\Hi_0, \Hi_1)$
as in Definition \ref{DefCorrespRep}. 
If in addition $A$ acts nondegenerately on $\X$,
then $\lambda_A$ is nondegenerate. 
\end{theorem}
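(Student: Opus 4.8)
The plan is to mimic the construction used for Hilbert bimodules (Theorem \ref{BimodRep}) but replace the left inner product, which a general C*-correspondence lacks, with the interior tensor product. Given a nondegenerate representation $\rho_B \colon B \to \Li(\Hi_0)$, we can regard $\Hi_0$ as an $(B,\C)$ C*-correspondence (a right Hilbert $\C$-module on which $B$ acts via $\rho_B$). The key idea is to form the interior tensor product correspondence $\X \otimes_{\rho_B} \Hi_0$, which by the discussion in Section \ref{s3} is an $(A,\C)$ C*-correspondence, i.e.\ simply a Hilbert space; call it $\Hi_1 \coloneqq \X \otimes_{\rho_B} \Hi_0$. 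The left action $\widetilde{\varphi}\colon A \to \Li_\C(\Hi_1) = \Li(\Hi_1)$ from equation (\ref{leftactionTensor}) furnishes the representation $\lambda_A$ of $A$ on $\Hi_1$.

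Next I would define $\pi_\X \colon \X \to \Li(\Hi_0, \Hi_1)$ by $\pi_\X(x)\xi = x \otimes \xi$ for $x \in \X$ and $\xi \in \Hi_0$. One must first check this is well defined and bounded: the inner product formula (\ref{innerProd}) gives
\[
\langle x \otimes \xi, x \otimes \xi \rangle_\C = \langle \xi, \rho_B(\langle x, x\rangle_B)\xi \rangle \leq \|\langle x,x\rangle_B\| \, \|\xi\|^2 = \|x\|^2 \|\xi\|^2,
\]
so $\|\pi_\X(x)\| \leq \|x\|$, and linearity in $x$ is immediate from bilinearity of the tensor product. Then I would verify the three compatibility conditions of Definition \ref{DefCorrespRep}. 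Condition (\ref{cco1}) follows from (\ref{leftactionTensor}): $\pi_\X(\varphi(a)x)\xi = \varphi(a)x \otimes \xi = \widetilde{\varphi}(a)(x\otimes\xi) = \lambda_A(a)\pi_\X(x)\xi$. Condition (\ref{cco2}) follows from (\ref{modaction}): $\pi_\X(xb)\xi = xb\otimes\xi = x\otimes\varphi_{\Hi_0}(b)\xi = x\otimes\rho_B(b)\xi = \pi_\X(x)\rho_B(b)\xi$. For condition (\ref{cco3}), one computes for $\xi, \eta \in \Hi_0$ that $\langle \pi_\X(x)\xi, \pi_\X(y)\eta\rangle_{\Hi_1} = \langle x\otimes\xi, y\otimes\eta\rangle_\C = \langle \xi, \rho_B(\langle x,y\rangle_B)\eta\rangle_{\Hi_0}$, which says exactly $\pi_\X(x)^*\pi_\X(y) = \rho_B(\langle x,y\rangle_B)$.

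It remains to handle the final assertion about nondegeneracy of $\lambda_A$. Here I would argue that the span of elementary tensors $x \otimes \xi$ is dense in $\Hi_1$ by construction, and that $\lambda_A(a)(x\otimes\xi) = \varphi(a)x\otimes\xi$. If $A$ acts nondegenerately on $\X$, then $\varphi(A)\X$ is dense in $\X$, so elements of the form $\varphi(a)x \otimes \xi$ have dense span among elementary tensors (using an approximate identity of $A$, or directly $\{\varphi(a)x : a\in A, x\in\X\}$ spans a dense subspace of $\X$ and the tensor map is continuous), hence dense in $\Hi_1$; therefore $\lambda_A(A)\Hi_1$ is dense in $\Hi_1$, i.e.\ $\lambda_A$ is nondegenerate. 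The main obstacle, though largely a bookkeeping one, is making sure the identification of $\Hi_0$ as a $(B,\C)$-correspondence is compatible with the convention that $\C$-valued inner products match Hilbert-space inner products with the physicist's convention (Notation \ref{HMconv} and the closing remark of the introduction), so that (\ref{innerProd}) really does reproduce $\langle \xi, \rho_B(\langle x,y\rangle_B)\eta\rangle$ with the correct order of arguments; getting the conjugate-linearity slot right is where a sign/ordering slip could creep in. No genuinely hard analytic estimate is needed beyond the boundedness bound above, which is why adapting Exel's bimodule argument goes through so cleanly once the left inner product is traded for the tensor construction.
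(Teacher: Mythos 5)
Your proposal is correct and follows essentially the same route as the paper: form $\Hi_1 = \X \otimes_{\rho_B} \Hi_0$ via the interior tensor product with the $(B,\C)$-correspondence $(\Hi_0,\rho_B)$, take $\lambda_A$ to be the induced left action, define $\pi_\X$ as the creation operator $\xi \mapsto x \otimes \xi$, and verify the compatibility conditions from equations (\ref{innerProd})--(\ref{leftactionTensor}). The only cosmetic difference is that the paper cites Proposition 2.66 of \cite{raeburn_williams_1998} for the existence and nondegeneracy of $\lambda_A$, whereas you argue the nondegeneracy directly from density of $\varphi(A)\X$ in $\X$; both are fine.
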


\begin{proof}
Notice that $(\Hi_0, \rho_B)$ is a $(B, \C)$ C*-correspondence. Let
$\Hi_1=\X \otimes_{\rho_B} \Hi_0$ be the interior tensor product of $(\X, \varphi)$ 
with $(\Hi_0, \rho_B)$, which is 
in particular a right Hilbert $\C$-module, that is, 
a Hilbert space. 
The representation of $A$ on $\Hi_1$ comes 
from the left action of $A$ on $\Hi_1$ 
gotten from  equation (\ref{leftactionTensor}) in the 
interior tensor product construction. 
Indeed,  Proposition 2.66 
of \cite{raeburn_williams_1998} gives
$\lambda_{A}\colon A \to \Li(\Hi_1)$, a representation 
of $A$, such that for each $a \in A$, 
each $x \in \X$, and each $\xi \in \Hi_0$,
\begin{equation}\label{pi_A_phi_X}
\lambda_{A}(a)(x \otimes \xi)=\varphi(a)x \otimes \xi.
\end{equation}
Furthermore, it is also shown in Proposition 2.66 
of \cite{raeburn_williams_1998} that 
$\lambda_A$ is nondegenerate whenever 
$A$ acts nondegenerately on $\X$. 
We now establish the existence of $\pi_\X$. This is motivated by 
the Fock space construction in \cite{pim1997}.
Indeed, for each $x \in \X$, let $\pi_\X(x)\colon \Hi_0 \to \Hi_1$ be the creation operator 
\begin{equation}\label{PI_X}
\pi_\X(x)\xi=x \otimes \xi.
\end{equation}
Then, it is clear that $x \mapsto \pi_\X(x)$ is a linear map
from $\X$ to $\Li(\Hi_0, \Hi_1)$. As in Remark \ref{FaithIsom},
boundedness of $\pi_\X$ will follow once we check the 
compatibility conditions from Definition \ref{DefCorrespRep},
which will in turn prove that $(\lambda_A, \rho_B, \pi_\X)$ is indeed a representation 
of $(\X, \varphi)$ on $(\Hi_0, \Hi_1)$. 
First we check condition (\ref{cco1}). If $a \in A$, $x \in \X$, and $\xi \in \Hi_0$, then 
\[
\pi_\X(\varphi(a)x)\xi = (\varphi(a)x)\otimes \xi = \lambda_A(a)(x \otimes \xi) 
= \lambda_A(a)\pi_\X(x)\xi.
\]
That is, $\pi_\X(\varphi(a)x)=\lambda_A(a)\pi_\X(x)$ as desired. Now for  
$b \in B$, $x \in \X$, and $\xi \in \Hi_0$, we use equation (\ref{modaction})
at the second step and find
\[
\pi_\X(xb)\xi=(xb) \otimes \xi = x \otimes \rho_B(b)\xi = \pi_\X(x)\rho_B(b)\xi.
\]
This gives that $\pi_\X(xb)=\pi_\X(x)\rho_B(b)$, proving condition  (\ref{cco2}). 
Finally, notice that equation (\ref{innerProd}) shows that 
$\pi_\X(x)^*\colon \Hi_1 \to \Hi_0$ is the 
annihilation operator satisfying, for any $z \in \X$ and $\xi \in \Hi_0$, 
\begin{equation}\label{aanih}
\pi_\X(x)^*(z \otimes \xi) = \rho_B(\langle x,z\rangle_B)\xi.
\end{equation}
Thus, for any $\xi \in \Hi_0$,
\[
\pi_\X(x)^*\pi_\X(y)\xi =\pi_\X(x)^*(y\otimes \xi) =\rho_B(\langle x, y \rangle_B)\xi,
\]
whence $\pi_\X(x)^*\pi_\X(y)  = \rho_B(\langle x, y \rangle_B)$, 
which is compatibility condition (\ref{cco3}), so we are done.
\end{proof}

The method we used in the proof of Theorem \ref{CorrespRep} can be easily adapted to produce 
a different proof of Theorem \ref{BimodRep}. Thus, we present below a restatement of Theorem \ref{BimodRep}
followed by a proof along the lines of the proof of Theorem \ref{CorrespRep}. 

\begin{theorem}\label{BimodRepInCorrespRep}
Let $A$ and $B$ be C*-algebras, and let $\X$ be a Hilbert 
$A$-$B$-bimodule. Then, for any nondegenerate representation $\rho_B$ of $B$ on a Hilbert space $\Hi_0$, 
there are a nondegenerate representation $\lambda_A\colon A \to \Li(\Hi_1)$ of $A$ on a Hilbert space 
$\Hi_1$ and a bounded linear map $\pi_\X \colon \X \to \Li(\Hi_0, \Hi_1)$,
such that $(\lambda_A, \rho_B, \pi_\X)$ is a representation of $\X$ on 
$(\Hi_0, \Hi_1)$
as in Definition \ref{RepBimodDef}. 
\end{theorem}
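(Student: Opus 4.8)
The plan is to mimic the proof of Theorem~\ref{CorrespRep}, since a Hilbert $A$-$B$-bimodule is in particular an $(A,B)$ C*-correspondence with $\varphi$ given by the left action. Applying Theorem~\ref{CorrespRep} verbatim already produces a nondegenerate representation $\lambda_A\colon A\to\Li(\Hi_1)$ on $\Hi_1=\X\otimes_{\rho_B}\Hi_0$ (nondegeneracy of $\lambda_A$ is free because a Hilbert bimodule always has $A$ acting nondegenerately on $\X$) together with the creation operator $\pi_\X(x)\xi=x\otimes\xi$ satisfying compatibility conditions (\ref{cc1}), (\ref{cc2}), (\ref{cc4}) from Definition~\ref{RepBimodDef} (these are exactly (\ref{cco1}), (\ref{cco2}), (\ref{cco3}) of Definition~\ref{DefCorrespRep}). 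So the only genuinely new thing to verify is the extra compatibility condition (\ref{cc3}), namely $\lambda_A({}_A\langle x,y\rangle)=\pi_\X(x)\pi_\X(y)^*$ on $\Hi_1$.

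First I would recall from the proof of Theorem~\ref{CorrespRep} the formula for the annihilation operator: $\pi_\X(y)^*(z\otimes\xi)=\rho_B(\langle y,z\rangle_B)\xi$ for $z\in\X$, $\xi\in\Hi_0$. Then for an elementary tensor $z\otimes\xi\in\Hi_1$ I would compute
\[
\pi_\X(x)\pi_\X(y)^*(z\otimes\xi)=\pi_\X(x)\bigl(\rho_B(\langle y,z\rangle_B)\xi\bigr)=x\otimes\rho_B(\langle y,z\rangle_B)\xi.
\]
Using equation (\ref{modaction}) (that is, $xb\otimes\zeta=x\otimes\rho_B(b)\zeta$ with $b=\langle y,z\rangle_B$) this equals $x\langle y,z\rangle_B\otimes\xi$. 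Now invoke the bimodule identity (\ref{bimod}), $x\langle y,z\rangle_B={}_A\langle x,y\rangle z$, to rewrite this as ${}_A\langle x,y\rangle z\otimes\xi$. Finally, by the defining property (\ref{pi_A_phi_X}) of $\lambda_A$ (with $a={}_A\langle x,y\rangle$ and recalling $\varphi$ is the left action, so $\varphi({}_A\langle x,y\rangle)z={}_A\langle x,y\rangle z$), this is precisely $\lambda_A({}_A\langle x,y\rangle)(z\otimes\xi)$. Since elementary tensors span a dense subspace of $\Hi_1$ and all operators involved are bounded, condition (\ref{cc3}) follows, completing the verification that $(\lambda_A,\rho_B,\pi_\X)$ is a representation of the bimodule $\X$ in the sense of Definition~\ref{RepBimodDef}.

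The computation itself is routine; there is no real obstacle, which is the point of presenting this restatement. The only thing to be slightly careful about is the bookkeeping of which variable the inner products and actions are linear/conjugate-linear in, and making sure the chain $x\otimes\rho_B(\langle y,z\rangle_B)\xi = x\langle y,z\rangle_B\otimes\xi = {}_A\langle x,y\rangle z\otimes\xi$ uses (\ref{modaction}) in the correct direction before applying (\ref{bimod}). Boundedness of $\pi_\X$ need not be argued separately here since it already follows from (\ref{cc4}) exactly as in Remark~\ref{FaithIsom} (or alternatively from the now-verified (\ref{cc3})). I would close by remarking, as the paper's introduction promises, that this proof has the advantage over Proposition~4.7 of \cite{Exel1993} of giving the explicit formula (\ref{PI_X}) for $\pi_\X$.
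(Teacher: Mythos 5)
Your proposal is correct and follows essentially the same route as the paper: reuse the construction of $\Hi_1=\X\otimes_{\rho_B}\Hi_0$, $\lambda_A$, and the creation operators from Theorem \ref{CorrespRep}, note that nondegeneracy of $\lambda_A$ is automatic because $A$ acts nondegenerately on a Hilbert bimodule, and verify condition (\ref{cc3}) on elementary tensors via the chain $\pi_\X(x)\pi_\X(y)^*(z\otimes\xi)=x\otimes\rho_B(\langle y,z\rangle_B)\xi=x\langle y,z\rangle_B\otimes\xi={}_A\langle x,y\rangle z\otimes\xi=\lambda_A({}_A\langle x,y\rangle)(z\otimes\xi)$, which is exactly the paper's computation using (\ref{aanih}), (\ref{modaction}), and (\ref{bimod}) in that order.
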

\begin{proof}
We get the Hilbert space $\Hi_1=\X \otimes_{\rho_B} \Hi_0$ and the map $\pi_\X \colon \X \to \Li(\Hi_0, \Hi_1)$ exactly as in the proof of Theorem \ref{CorrespRep}. Since 
$A$ acts on $\X$ via $\langle -, - \rangle_B$-adjointable operators (see Remark \ref{AadjX}),
 we use Proposition 2.66 
of \cite{raeburn_williams_1998} to get 
$\lambda_{A}\colon A \to \Li(\Hi_1)$, a representation 
of $A$, such that for each $a \in A$, 
each $x \in \X$, and each $\xi \in \Hi_0$,
\[
\lambda_{A}(a)(x \otimes \xi)=(ax) \otimes \xi.
\]
Furthermore, since $\X$ is a left Hilbert $A$-module, it follows that $A$ acts nondegenerately on $\X$.
Thus, Proposition 2.66 
of \cite{raeburn_williams_1998} also guarantees that $\lambda_{A}$ is nondegenerate. 
Finally, compatibility condition (\ref{cc1}) from Definition \ref{RepBimodDef} 
is shown exactly as compatibility condition (\ref{cco1}) from Definition \ref{DefCorrespRep} was 
shown in the proof of Theorem \ref{CorrespRep}. Since 
compatibility conditions (\ref{cc2}) and (\ref{cc4}) from Definition \ref{RepBimodDef} 
coincide with compatibility conditions (\ref{cco2}) and (\ref{cco3})
from Definition \ref{DefCorrespRep}, we only need to make sure that compatibility condition (\ref{cc3}) of Definition 
\ref{RepBimodDef} is satisfied. Indeed, for any $x,y,z \in \X$, and any $\xi \in \Hi_0$,
using equation (\ref{aanih}) at the first step, equation (\ref{modaction}) at the second step,
and equation (\ref{bimod})
at the third one, we get
\begin{align*}
\pi_\X(x)\pi_\X(y)^*(z \otimes \xi) & =x \otimes \rho_B(\langle y,z\rangle_B)\xi \\
& = x\langle y,z\rangle_B \otimes \xi \\
& = {}_A\langle x, y \rangle z \otimes \xi \\
&= \lambda_A({}_A\langle x, y \rangle)(z \otimes \xi).
\end{align*}
Thus, $\pi_\X(x)\pi_\X(y)^*=\lambda_A({}_A\langle x, y \rangle)$, as wanted. 
\end{proof}

We give three remarks about the last two results. On those we
explain how these results are useful to apply 
the main result of Section \ref{s2} and also how the proofs of 
these theorems compare to those known in the current literature.

\begin{remark}\label{Daws_Comment4}
The construction of the Hilbert space $\Hi_1$ given in the proofs 
of Theorems \ref{CorrespRep} and \ref{BimodRepInCorrespRep}
above clearly implies that $\pi_X(\X)\Hi_0$ is dense in $\Hi_1$. 
This is also true for the Hilbert space $\Hi_1$ obtained from 
Theorem \ref{BimodRep}. 
Indeed, according to the proof of Proposition 4.7 in \cite{Exel1993}, 
the space $\Hi_1$ is defined as follows. Let $L$ be the linking algebra 
of $\X$ and $\iota \colon \X \to L$ the inclusion of $\X$ 
as the upper right corner of $L$. 
Then, $\Hi_1$ is defined as the closure of $\pi(
\iota(\X))\Hi_0$,
where $\pi$ is a suitable representation of $L$ on a Hilbert space $\Hi$ that contains $\Hi_0$.
For each $x \in \X$, the operator $\pi_\X(x) \in \Li(\Hi_0, \Hi_1)$
is then defined as the restriction of $\pi(\iota(x))$ to $\Hi_0$,
so it follows that $\pi_\X(\X)\Hi_0$ is dense in $\Hi_1$. 
This shows that the map $\pi_\X$, no matter from 
which construction presented so far was obtained, 
satisfies the nondegeneracy condition on the hypothesis 
of Proposition \ref{RepforKandL}. 
\end{remark}

\begin{remark}
The proof of Theorem \ref{BimodRep} given in \cite{Exel1993}
does not appear to have an obvious modification to
make it work for the C*-correspondence case. This is
due to the fact that their proof relies on the linking algebra of
the bimodule $\X$, which does not exist in the general C*-correspondence 
setting due to the lack of an $A$-valued left inner product.
We also believe that the arguments used in \cite{Zettl1983}
to prove our Corollary \ref{XisMainE}
can't be modified to produce a proof of Theorem \ref{CorrespRep}. Finally, we point out that 
the methods we employed to show Theorem \ref{CorrespRep}
differ from those used in \cite{Zettl1983} and \cite{Exel1993}. 
In particular, we have obtained in equation (\ref{PI_X}) a concise formula for $\pi_\X$ that 
might be useful to produce concrete representations 
of both Hilbert bimodules and modules. 
\end{remark}

\begin{remark}\label{repFaith}
It follows from the definition of $\lambda_A$ in \eqref{pi_A_phi_X} 
that if $\varphi$ is not injective, then $\lambda_A$ is not faithful. 
The converse holds provided that the representation $\rho_B$ in the hypotheses of 
Theorem \ref{CorrespRep} (or in Theorem \ref{BimodRepInCorrespRep}) is faithful. 
Indeed, suppose that $\varphi$ is injective and assume, for the sake of 
a contradiction, that there is 
a nonzero $a \in A$ such that $\lambda_{A}(a)=0$. 
Since $\varphi$ is injective, there is a nonzero $x \in \X$ such that $\varphi(a)x\neq 0$
(for the Hilbert bimodule case we interpret $\varphi(a)x$ as $ax$). We can 
find nonzero elements $y \in \X$ and $b \in B$ such that 
$x=yb$ (see for example Proposition 2.31 in \cite{raeburn_williams_1998}). 
Then, for any $\xi \in \Hi_0$, using (\ref{modaction}) at 
the third step, 
\[
0=\lambda_{A}(a)(x \otimes \xi) = \varphi(a)yb \otimes \xi 
= \varphi(a)y \otimes \rho_B(b)\xi.
\]
Since $\varphi(a)x \neq 0$, it follows that $\varphi(a)y \neq 0$ and therefore 
$\rho_B(b)\xi=0$ for all $\xi \in \Hi_0$. Hence, faithfulness of 
$\rho_B$ implies that $b=0$, a contradiction.
\end{remark}

We warn the reader that Remark \ref{repFaith} above was mistakenly stated 
in Remark 3.3.6. in \cite{Del2023} for it required $\varphi$ to be nondegenerate rather than 
injective. We thank Menevse Eryuzlu for pointing this out to us. 

We now present two applications 
of Theorem \ref{CorrespRep}. The first application
answers the problem of determining when 
an $(A,B)$ C*-correspondence can be uniquely given the structure 
of a Hilbert $A$-$B$-bimodule. 

\begin{theorem}\label{Suff_Nec_Cond_Corres_is_Bimod}
Let $A$ and $B$ be C*-algebras, let $(\X, \varphi)$ be an
$(A,B)$ C*-correspondence, let $A_0 = A/\ker(\varphi)$, and  let $\varphi_0 \colon A_0 \to \Li_{B}(\X)$
be the injective $*$-homomorphism induced by $\varphi$, which makes $(\X, \varphi_0)$
an $(A_0,B)$ C*-correspondence.
Then, there is a unique $A_0$-valued left inner product on $\X$
making it a Hilbert $A_0$-$B$-bimodule if and only if $\mathcal{K}_B(\X)
\subseteq \varphi(A)$. 
\end{theorem}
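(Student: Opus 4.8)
The plan is to prove the two implications separately, using the representation machinery of Theorem~\ref{CorrespRep} and the description of $\mathcal{K}_B(\X)$ from Proposition~\ref{K_AX} (via Proposition~\ref{RepforKandL}). Throughout I work with the injective correspondence $(\X,\varphi_0)$, so that $\varphi_0$ identifies $A_0$ with a C*-subalgebra of $\Li_B(\X)$; note $\varphi(A)=\varphi_0(A_0)$ as subsets of $\Li_B(\X)$, so the condition $\mathcal{K}_B(\X)\subseteq\varphi(A)$ is the same as $\mathcal{K}_B(\X)\subseteq\varphi_0(A_0)$.

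\textbf{Sufficiency.} Assume $\mathcal{K}_B(\X)\subseteq\varphi_0(A_0)$. Fix a faithful nondegenerate representation $\rho_B$ of $B$ on $\Hi_0$ and apply Theorem~\ref{CorrespRep} to get $(\lambda_{A_0},\rho_B,\pi_\X)$ on $(\Hi_0,\Hi_1)$ with $\Hi_1=\X\otimes_{\rho_B}\Hi_0$; since $\rho_B$ is faithful, $\pi_\X$ is isometric, and by Remark~\ref{Daws_Comment4} $\pi_\X(\X)\Hi_0$ is dense in $\Hi_1$. Identify $\X$ with $\pi_\X(\X)\subseteq\Li(\Hi_0,\Hi_1)$ and $A_0$ with $\lambda_{A_0}(A_0)\subseteq\Li(\Hi_1)$ (faithfulness of $\lambda_{A_0}$ follows from injectivity of $\varphi_0$ and Remark~\ref{repFaith}). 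By Proposition~\ref{RepforKandL}\eqref{RCL1}, $\mathcal{K}_B(\X)\cong\cj{\op{span}\{xy^*:x,y\in\X\}}$ inside $\Li(\Hi_1)$, with $\theta_{x,y}\leftrightarrow xy^*$; the hypothesis $\mathcal{K}_B(\X)\subseteq\varphi_0(A_0)$ thus says $xy^*\in\lambda_{A_0}(A_0)$ for all $x,y\in\X$, i.e. for each $x,y$ there is $a\in A_0$ with $\lambda_{A_0}(a)=xy^*$. Now define ${}_{A_0}\langle x,y\rangle$ to be this element $a$; it is well defined because $\lambda_{A_0}$ is injective, and linearity/conjugate-linearity and the adjoint identity ${}_{A_0}\langle x,y\rangle^*={}_{A_0}\langle y,x\rangle$ follow from the corresponding identities for $xy^*$. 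The compatibility equation \eqref{bimod}, ${}_{A_0}\langle x,y\rangle z=x\langle y,z\rangle_B$, becomes $(xy^*)z=x(y^*z)$ in $\Li(\Hi_0,\Hi_1)$, which is just associativity of operator composition (using $y^*z=\rho_B(\langle y,z\rangle_B)$). That ${}_{A_0}\langle x,x\rangle\ge0$ follows since $xx^*\ge0$ in $\Li(\Hi_1)$ and $\lambda_{A_0}$ is a $*$-isomorphism onto its range; and ${}_{A_0}\langle x,x\rangle=0\Rightarrow xx^*=0\Rightarrow x=0$. Finally the span of the ${}_{A_0}\langle x,y\rangle$ is dense in $A_0$: its image under $\lambda_{A_0}$ is dense in $\mathcal{K}_B(\X)$, but we must also check it is all of $\varphi_0(A_0)$ — here use that $\X$ is a right Hilbert $\mathcal{K}_B(\X)$-module with $\mathcal{K}_B(\X)$ acting nondegenerately, hence $\mathcal{K}_B(\X)$ already acts nondegenerately on $\X$, which forces $\varphi_0(A_0)=\mathcal{K}_B(\X)$ when $\mathcal{K}_B(\X)\subseteq\varphi_0(A_0)\subseteq\Li_B(\X)$ and $A_0$ acts — wait, one needs $A_0$ to act nondegenerately. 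I would handle this by noting $\mathcal{K}_B(\X)\X=\X$ and $\mathcal{K}_B(\X)\subseteq\varphi_0(A_0)$ forces $\varphi_0(A_0)\X=\X$, so $A_0$ acts nondegenerately, and then the ideal $\mathcal{K}_B(\X)\trianglelefteq\Li_B(\X)$ meets $\varphi_0(A_0)$ in an ideal that acts nondegenerately on $\X$, from which $\varphi_0(A_0)=\mathcal{K}_B(\X)$; thus the left inner product is full.

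\textbf{Necessity.} Suppose $\X$ carries an $A_0$-valued left inner product ${}_{A_0}\langle-,-\rangle$ making it a Hilbert $A_0$-$B$-bimodule, with left action $\varphi_0$ (we must check the left action of the bimodule coincides with $\varphi_0$: by Remark~\ref{AadjX} the bimodule's left action is adjointable, and equation \eqref{bimod} together with density of $\langle\X,\X\rangle_B$ in an approximate identity for the action pins it down to agree with $\varphi_0$). For any $x,y,z\in\X$ we have $\varphi_0({}_{A_0}\langle x,y\rangle)z={}_{A_0}\langle x,y\rangle z=x\langle y,z\rangle_B=\theta_{x,y}(z)$, so $\varphi_0({}_{A_0}\langle x,y\rangle)=\theta_{x,y}$ in $\Li_B(\X)$. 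Hence every rank-one operator $\theta_{x,y}$ lies in $\varphi_0(A_0)$; taking closed linear spans and using that $\varphi_0(A_0)$ is closed (it is a C*-subalgebra, being the image of a $*$-homomorphism on a C*-algebra) gives $\mathcal{K}_B(\X)\subseteq\varphi_0(A_0)=\varphi(A)$.

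\textbf{Uniqueness.} Uniqueness of the left inner product is essentially forced by the above: if ${}_{A_0}\langle x,y\rangle$ and ${}_{A_0}\langle x,y\rangle'$ both work, both must satisfy $\varphi_0({}_{A_0}\langle x,y\rangle)=\theta_{x,y}=\varphi_0({}_{A_0}\langle x,y\rangle')$, and injectivity of $\varphi_0$ (which holds on $A_0$ by construction) forces equality. I would state this as the final line of the proof. The main obstacle I anticipate is the fullness/nondegeneracy bookkeeping in the sufficiency direction — specifically arguing that $\operatorname{span}\{{}_{A_0}\langle x,y\rangle\}$ is dense in all of $A_0$ (not merely in a copy of $\mathcal{K}_B(\X)$), which requires knowing that under the hypothesis $\mathcal{K}_B(\X)\subseteq\varphi(A)$ the map $\varphi_0$ actually has range exactly $\mathcal{K}_B(\X)$; the cleanest route is through nondegeneracy of the $\mathcal{K}_B(\X)$-action on $\X$ combined with the ideal structure of $\mathcal{K}_B(\X)$ in $\Li_B(\X)$, as sketched above.
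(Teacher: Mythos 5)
Your necessity and uniqueness arguments are correct, and your uniqueness argument is in fact more direct than the paper's: you observe that any compatible left inner product must satisfy $\varphi_0({}_{A_0}\langle x,y\rangle)=\theta_{x,y}$ and invoke injectivity of $\varphi_0$, whereas the paper routes the same conclusion through the representation $(\lambda_{A_0},\rho_B,\pi_\X)$ supplied by Theorem \ref{CorrespRep} and the injectivity of $\lambda_{A_0}$ from Remark \ref{repFaith}. For sufficiency, your definition of ${}_{A_0}\langle x,y\rangle$ as $\lambda_{A_0}^{-1}(\pi_\X(x)\pi_\X(y)^*)$ is the operator-picture version of the paper's direct formula ${}_{A_0}\langle x,y\rangle=\varphi_0^{-1}(\theta_{x,y})$; this works, modulo one unstated (but true) compatibility: if $\theta_{x,y}=\varphi_0(a)$ in $\Li_B(\X)$ then $\pi_\X(x)\pi_\X(y)^*=\lambda_{A_0}(a)$ in $\Li(\Hi_1)$, which follows by evaluating both sides on $\pi_\X(z)\xi$ and using density of $\pi_\X(\X)\Hi_0$ in $\Hi_1$.

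The genuine error is the final part of your sufficiency argument, where you claim that $\op{span}\{{}_{A_0}\langle x,y\rangle\colon x,y\in\X\}$ is dense in $A_0$ by arguing that $\varphi_0(A_0)=\mathcal{K}_B(\X)$. That identity is false in general: take $B$ nonunital, $\X=B$ as a right Hilbert $B$-module, and $A=\Li_B(B)=M(B)$ with $\varphi=\op{id}$; then $\mathcal{K}_B(\X)\cong B\subsetneq M(B)=\varphi_0(A_0)$ even though $\mathcal{K}_B(\X)\subseteq\varphi(A)$. The specific step that fails is the inference that an ideal of $\varphi_0(A_0)$ acting nondegenerately on $\X$ must be all of $\varphi_0(A_0)$ --- the same example refutes it. Fortunately the whole fullness discussion is unnecessary: Definition \ref{Defbimod} asks only for a left Hilbert $A_0$-module structure, with no fullness requirement, and Corollary \ref{COR_Suff_Nec_Cond_Corres_is_Bimod} makes explicit that $\cj{\op{span}\,{}_{A_0}\langle \X,\X\rangle}$ is in general only a proper ideal of $A_0$ isomorphic to $\mathcal{K}_B(\X)$. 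Delete that paragraph, and add the one-line verification $\|{}_{A_0}\langle x,x\rangle\|=\|\pi_\X(x)\pi_\X(x)^*\|=\|x\|^2$ so that the left-module norm coincides with the given one and $\X$ is complete as a left Hilbert $A_0$-module; with those changes the proof is correct and essentially the paper's.
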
 
\begin{proof}
First we establish the uniqueness of the $A_0$-valued left inner product on $\X$.
Let $\rho_B \colon  B \to \Li(\Hi_0)$ 
be the universal representation of $B$, which is 
faithful, and apply Theorem \ref{CorrespRep} to get an isometric representation 
$(\lambda_{A_0}, \rho_B, \pi_{\X})$ for the $(A_0, B)$ C*-correspondence $(\X, \varphi_0)$. Since $\varphi_0$ and $\rho_B$ are injective, Remark \ref{repFaith} shows that $\lambda_{A_0}$ is injective. 
Suppose now that  ${}_{A_0}\langle -,- \rangle \colon \X \times \X \to A_0$ and  ${}_{A_0}(-\mid-) \colon \X \times \X \to A_0$ are two $A_0$-valued left inner products making $\X$ a Hilbert $A_0$-$B$-bimodule. 
The proof of 
Theorem \ref{BimodRepInCorrespRep} now shows that, 
for every $x, y \in \X$,
\[
\lambda_{A_0}({}_{A_0}\langle x,y \rangle) = \pi_\X(x)\pi_\X(y)^*= \lambda_{A_0}({}_{A_0}( x \mid y )).
\]
Since $\lambda_{A_0}$ is injective, the above implies that  ${}_{A_0}\langle x,y \rangle={}_{A_0}( x \mid y )$, as wanted. 

Next, assume that there is $A_0$-valued left inner product on $\X$
making it a Hilbert $A_0$-$B$-bimodule. That is, there is a map 
${}_{A_0}\langle -, -\rangle \colon \X \times \X \to A_0$ such that 
for any $x,y, z \in \X$ we have $\varphi_0({}_{A_0}\langle x, y\rangle)z =x \langle y,z \rangle_B$.
Since $x \langle y,z \rangle_B=\theta_{x,y} (z)$, this proves $\theta_{x,y} \in \varphi_0(A_0) = \varphi(A)$ for any 
$x, y \in \X$, which in turn implies $\mathcal{K}_B(\X)
\subseteq \varphi(A)$. 

Conversely, assume that $\mathcal{K}_B(\X)
\subseteq \varphi(A)$. 
Then $\theta_{x,y} \in \varphi(A)=\varphi_0(A_0)$ for each $x,y \in \X$. Thus, we use the $*$-isomorphism $\varphi_{_0}^{-1} \colon \varphi_0(A_0) \to A_0$ to define 
\begin{equation}\label{L_IP}
{}_{A_0}\langle x, y \rangle =\varphi_{_0}^{-1}(\theta_{x,y}).
\end{equation}
It is immediate to check that $(x,y)\mapsto {}_{A_0}\langle x, y \rangle$ is indeed 
an $A_0$-valued left inner product on $\X$. Furthermore, if $x \in \X$, then 
\[
\| {}_{A_0}\langle x, x \rangle \| = \| \varphi_{_0}^{-1}(\theta_{x,y})\| =\| \theta_{x,x} \| = \| x\|.
\]
Hence, $\X$ is indeed a left Hilbert $A_0$-module. 
Finally, by \eqref{L_IP}, 
\[
\varphi_0({}_{A_0}\langle x, y\rangle)z = \theta_{x,y}z= x \langle y,z \rangle_B,
\]
for all $x,y, z \in \X$, proving equation  (\ref{bimod}) and therefore that 
$\X$ a Hilbert $A_0$-$B$-bimodule, as wanted. 
\end{proof}

\begin{remark}
If the $*$-homomorphism $\varphi \colon A \to \Li_B(\X)$ in  Theorem \ref{Suff_Nec_Cond_Corres_is_Bimod} is injective,
then we get at once that $\mathcal{K}_B(\X)
\subseteq \varphi(A)$ is a necessary and sufficient condition 
for the C*-correspondence $(\X, \varphi)$ to have a unique structure of a Hilbert $A$-$B$-bimodule. 
\end{remark}

\begin{remark}\label{katsuraAA}
In general, if $(\X, \varphi)$ is an $(A,B)$ C*-correspondence where $\varphi$ is 
not necessarily injective, then $(\X, \varphi)$ will have a unique structure of a Hilbert $A$-$B$-bimodule if and only if $A$ has an ideal $J$ such that $J$ is $*$-isomorphic to $\mathcal{K}_B(\X)$ via $\varphi|_{J}$. 
The case $A=B$
was proven by T. Katsura in Lemmas 3.3 and 3.4 from \cite{kat2003}. 
The general case was later shown by A. Buss and R. Meyer in \cite{BussMeyer17}, we are thankful to Ralph Meyer for providing this last
reference to us. 
In Corollary \ref{COR_Suff_Nec_Cond_Corres_is_Bimod} 
we show that the general case (see Lemma 4.2 
in \cite{BussMeyer17}) does follow from Theorem \ref{Suff_Nec_Cond_Corres_is_Bimod}.
\end{remark}

\begin{corollary}\label{COR_Suff_Nec_Cond_Corres_is_Bimod}
Let $A$ and $B$ be C*-algebras and let $(\X, \varphi)$ be an
$(A,B)$ C*-correspondence.
Then, there is a unique $A$-valued left inner product on $\X$
making it a Hilbert $A$-$B$-bimodule if and only if $A$ has in ideal $J$ such that $J$ is $*$-isomorphic to $\mathcal{K}_B(\X)$ via $\varphi|_{J}$. 
\end{corollary}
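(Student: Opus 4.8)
The plan is to deduce Corollary~\ref{COR_Suff_Nec_Cond_Corres_is_Bimod} from Theorem~\ref{Suff_Nec_Cond_Corres_is_Bimod} by relating the two conditions ``$\mathcal{K}_B(\X) \subseteq \varphi(A)$'' and ``$A$ has an ideal $J$ with $\varphi|_J$ a $*$-isomorphism onto $\mathcal{K}_B(\X)$'', while keeping careful track of the quotient $A_0 = A/\ker(\varphi)$ that appears in the theorem. The key observation is that $\mathcal{K}_B(\X)$ is always an ideal of $\Li_B(\X)$ and $\varphi(A)$ is a C*-subalgebra of $\Li_B(\X)$, so $\mathcal{K}_B(\X) \subseteq \varphi(A)$ already forces $\mathcal{K}_B(\X)$ to be an ideal of $\varphi(A)$; pulling back through the $*$-isomorphism $\varphi_0 \colon A_0 \to \varphi(A)$ gives an ideal of $A_0$, and pulling that back through the quotient map $q \colon A \to A_0$ gives an ideal of $A$.

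First I would prove the forward direction. Suppose there is a unique $A$-valued left inner product making $\X$ a Hilbert $A$-$B$-bimodule. As noted in the excerpt (Remark after Definition~\ref{Defbimod}, and the discussion before Theorem~\ref{Suff_Nec_Cond_Corres_is_Bimod}), a Hilbert $A$-$B$-bimodule has $A$ acting nondegenerately and, crucially, the left action here is $\varphi$ itself; equation~(\ref{bimod}) then reads $\varphi({}_A\langle x,y\rangle)z = x\langle y,z\rangle_B = \theta_{x,y}(z)$, so $\theta_{x,y} \in \varphi(A)$ for all $x,y$, hence $\mathcal{K}_B(\X) \subseteq \varphi(A)$. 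By Theorem~\ref{Suff_Nec_Cond_Corres_is_Bimod} (applied in the direction ``bimodule structure on $\X$ over $A$ $\Rightarrow$ $\mathcal{K}_B(\X) \subseteq \varphi(A)$''), this also yields the uniqueness of the $A_0$-structure, but what I really need is the ideal. I would argue that $\varphi$ must in fact be injective: if $a \in \ker(\varphi)$, then ${}_A\langle x,y\rangle \, a^* $-type manipulations, or more directly the compatibility ${}_A\langle ax, y\rangle = a\,{}_A\langle x,y\rangle$ together with $ax = \varphi(a)x = 0$ for all $x$ (using nondegeneracy of the left action and density of the span of ${}_A\langle x,y\rangle$ in $A$), force $a = 0$. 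Hence $\varphi$ is injective, $A_0 = A$, and setting $J = \varphi^{-1}(\mathcal{K}_B(\X))$ gives an ideal of $A$ (preimage of the ideal $\mathcal{K}_B(\X) \trianglelefteq \varphi(A)$ under the injective $*$-homomorphism $\varphi$) on which $\varphi$ restricts to a $*$-isomorphism onto $\mathcal{K}_B(\X)$.

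For the converse, suppose $A$ has an ideal $J$ with $\varphi|_J \colon J \to \mathcal{K}_B(\X)$ a $*$-isomorphism. Then $\mathcal{K}_B(\X) = \varphi(J) \subseteq \varphi(A)$, so Theorem~\ref{Suff_Nec_Cond_Corres_is_Bimod} produces a unique $A_0$-valued left inner product making $\X$ a Hilbert $A_0$-$B$-bimodule, with ${}_{A_0}\langle x,y\rangle = \varphi_0^{-1}(\theta_{x,y})$ as in~(\ref{L_IP}). I then need to upgrade this to an $A$-valued structure. The point is that $\varphi|_J$ being a $*$-isomorphism onto $\mathcal{K}_B(\X)$ forces $J \cap \ker(\varphi) = 0$, and since $\mathcal{K}_B(\X) \subseteq \varphi(A)$ one checks that $J \cong \mathcal{K}_B(\X)$ actually sits inside $A$ as a copy that maps isomorphically onto $\varphi_0^{-1}(\mathcal{K}_B(\X)) \subseteq A_0$; composing, $q|_J \colon J \to A_0$ is injective with image $\varphi_0^{-1}(\mathcal{K}_B(\X))$, and $\varphi|_J = \varphi_0 \circ q|_J$. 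Thus I can define ${}_A\langle x,y\rangle = (q|_J)^{-1}(\varphi_0^{-1}(\theta_{x,y})) = (\varphi|_J)^{-1}(\theta_{x,y}) \in J \subseteq A$. Conjugate-linearity/linearity, positivity, and the norm identity $\|{}_A\langle x,x\rangle\| = \|\theta_{x,x}\| = \|x\|$ follow from the corresponding facts for $\varphi_0^{-1}(\theta_{x,y})$ exactly as in the proof of Theorem~\ref{Suff_Nec_Cond_Corres_is_Bimod}, and the bimodule identity~(\ref{bimod}) is $\varphi({}_A\langle x,y\rangle)z = \theta_{x,y}(z) = x\langle y,z\rangle_B$. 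Uniqueness: any $A$-valued left inner product descends to an $A_0$-valued one (since $\ker(\varphi)$ annihilates $\X$ on the left, the left inner product must take values in a complement, concretely in $J$), and by the uniqueness clause of Theorem~\ref{Suff_Nec_Cond_Corres_is_Bimod} it must agree with the one just constructed.

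The main obstacle I anticipate is the bookkeeping around $\ker(\varphi)$ in the converse: showing cleanly that an $A$-valued left inner product is forced to take values in the distinguished copy $J$ of $\mathcal{K}_B(\X)$ (equivalently, that $\ker(\varphi)$ contributes nothing), so that ``uniqueness over $A_0$'' really does upgrade to ``uniqueness over $A$.'' This hinges on the elementary but slightly fiddly fact that for a Hilbert $A$-$B$-bimodule the left inner product values lie in $\cj{\op{span}}\{{}_A\langle x,y\rangle\}$, which is an ideal of $A$ on which $\varphi$ is faithful (again by the nondegeneracy argument above), hence is carried isomorphically by $\varphi$ onto $\mathcal{K}_B(\X)$ and therefore coincides with $J$. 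Everything else is a routine transfer of structure along the $*$-isomorphisms $\varphi|_J$, $q|_J$, and $\varphi_0$.
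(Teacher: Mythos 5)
Your converse direction (ideal $\Rightarrow$ bimodule structure) follows essentially the same route as the paper: feed $\mathcal{K}_B(\X)=\varphi(J)\subseteq\varphi(A)$ into Theorem \ref{Suff_Nec_Cond_Corres_is_Bimod} and pull the resulting $A_0$-valued inner product back along the $*$-isomorphisms to the $J$-valued, hence $A$-valued, inner product ${}_A\langle x,y\rangle=(\varphi|_J)^{-1}(\theta_{x,y})$. That part is fine.

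The forward direction has a genuine gap: a Hilbert $A$-$B$-bimodule structure does \emph{not} force $\varphi$ to be injective. Your injectivity argument invokes ``density of the span of ${}_A\langle x,y\rangle$ in $A$,'' but the closed span of the left inner product is in general only an ideal of $A$ (left-fullness is not part of Definition \ref{Defbimod}); the identity $a\,{}_A\langle x,y\rangle={}_A\langle ax,y\rangle=0$ for $a\in\ker\varphi$ only says $a$ annihilates that ideal, not that $a=0$. Concretely, take $A=\C\oplus\C$, $B=\C$, $\X=\C$ with $(a_1,a_2)\cdot x=a_1x$ and ${}_A\langle x,y\rangle=(x\bar y,0)$: this is a Hilbert $A$-$B$-bimodule with nondegenerate left action, yet $\ker\varphi=0\oplus\C\neq 0$. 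When $\varphi$ is not injective, your candidate $J=\varphi^{-1}(\mathcal{K}_B(\X))$ contains $\ker\varphi$, so $\varphi|_J$ is not a $*$-isomorphism; and handling exactly this non-injective case is the whole point of the corollary versus the remark following Theorem \ref{Suff_Nec_Cond_Corres_is_Bimod}. The correct choice --- which is what the paper makes, citing Proposition 1.10 of \cite{BMS1994} --- is $J=\cj{{}_A\langle\X,\X\rangle}$: this is an ideal of $A$; $\varphi|_J$ is faithful because if $a\in J$ and $\varphi(a)=0$ then $a$ left-annihilates $J$, so in particular $aa^*=0$; and its image is $\mathcal{K}_B(\X)$ since $\varphi({}_A\langle x,y\rangle)=\theta_{x,y}$ and $*$-homomorphic images are closed. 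You do gesture at this ideal in your final paragraph, but the faithfulness you claim for it rests on the same flawed density argument, so it must be replaced by the $aa^*=0$ argument just given; the same repair is needed to make your uniqueness step (forcing any $A$-valued left inner product to take values in $J$) go through.
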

\begin{proof}
Put $A_0 = A/ \ker(\varphi)$. Assume first that $A$ has an ideal $J$ such that $J$ is $*$-isomorphic to $\mathcal{K}_B(\X)$ via $\varphi|_{J}$. Then $\mathcal{K}_B(\X) = \varphi|_{J}(J) \subseteq \varphi(A)$, so 
Theorem \ref{Suff_Nec_Cond_Corres_is_Bimod} implies that 
$X$ has a unique left $A_0$-valued inner product, 
say ${}_{A_0}\langle -, -\rangle \colon \X \times \X \to A_0$, that makes 
$\X$ into a Hilbert $A_0$-$B$-bimodule. By standard Hilbert module results 
(see for instance Proposition 1.10 in \cite{BMS1994}) the ideal $J_0 = \cj{{}_{A_0}\langle \X, \X\rangle} \subseteq A_0$, the closed span of the $A_0$-valued inner product, is $*$-isomorphic to $\mathcal{K}_B(\X)=\varphi|_{J}(J)$. This implies that 
the left $J_0$-valued inner product can be though as a left $J$-valued inner product, 
which upgrades $\X$ to a Hilbert $A$-$B$-bimodule.

Conversely, if $\X$ is a Hilbert $A$-$B$-bimodule, 
then Proposition 1.10 in \cite{BMS1994} shows that $J_A=\cj{{}_{A}\langle \X, \X\rangle} \subseteq A$,
the closed span of the left $A$-valued inner product, is an ideal in $A$ which is $*$-isomorphic to $\mathcal{K}_B(\X)$ via the restriction of the left action of $A$ to $J_A$. 

Finally, the uniqueness of the left $A$-valued inner product is established 
as in the proof of Theorem \ref{Suff_Nec_Cond_Corres_is_Bimod}, but this time 
applying Theorem \ref{CorrespRep} to the $(J,B)$ C*-correspondence $(\X, \varphi|_{J})$.
\end{proof}

As an application of Theorem \ref{Suff_Nec_Cond_Corres_is_Bimod}, 
we give below an easy proof that a Hilbert space of 
dimension at least 2, thought of as a $(\C, \C)$ C*-correspondence, can't be given the structure of a Hilbert 
$\C$-$\C$-bimodule.

\begin{example}\label{HasCC}
Let $\Hi$ be a Hilbert space with dimension at least $2$.
Clearly $\Hi$ is a right Hilbert $\C$-module,  $\Li_\C(\Hi)=\Li(\Hi)$, 
and $\mathcal{K}_\C(\Hi)=\mathcal{K}(\Hi)$. 
For each $a \in \C$, we define $\varphi(a)=a \cdot \op{id}_{\Hi}$. 
Then, $\varphi \colon \C \to \Li(\Hi)$ makes $(\Hi, \varphi)$ into
a $(\C, \C)$ C*-correspondence with injective left action. Furthermore, 
it's clear that $\varphi(\C)\Hi = \Hi$, so the left action 
is nondegenerate.
We claim that $\mathcal{K}(\Hi) \not\subseteq \varphi(\C)$. 
Let $(\xi_j)_{j \in J}$ be an orthonormal
basis for $\Hi$. 
By assumption $\op{card}(J) \geq 2$
and therefore we can find $j,k \in J$ with $j \neq k$. 
Notice that $\theta_{\xi_j, \xi_k}(\xi_k)=\xi_j$, whence 
$\theta_{\xi_j, \xi_k} \neq \varphi(a)$ for all $a \in \C$, 
proving the claim. Therefore, 
Theorem \ref{Suff_Nec_Cond_Corres_is_Bimod} implies
that $(\X, \varphi)$ can't be given the structure of a 
 Hilbert 
$\C$-$\C$-bimodule. 
\end{example}

\begin{remark}
A direct sum of Hilbert $A$-$B$-bimodules is not,
in general, a Hilbert bimodule again. However, it is an $(A,B)$ C*-correspondence. It is not hard to see that the C*-correspondence in Example 
\ref{HasCC} is a direct sum of Hilbert $\C$-$\C$-bimodules. We have not investigated which C*-correspondences can be decomposed as a direct sum of 
Hilbert bimodules, but we believe Theorem \ref{Suff_Nec_Cond_Corres_is_Bimod} and Corollary \ref{COR_Suff_Nec_Cond_Corres_is_Bimod} might be useful 
tools to tackle this problem. 
\end{remark}

As an application of concrete representations of right Hilbert modules (as in Definition \ref{RepMod}), Murphy 
gives an easy construction of the exterior tensor product of right Hilbert modules (see
Theorem 3.2 in \cite{murphy_1997}). In analogy with Murphy's result, we conclude the paper with a second
application of Theorem \ref{CorrespRep}, which deals
with how to get a representation for the interior 
tensor product of an $(A,B)$ C*-correspondence $(\X, \varphi_\X)$ with a $(B,C)$ 
C*-correspondence $(\Y, \varphi_\Y)$
using particular representations of the C*-correspondences
$(\X, \varphi_\X)$ and $(\Y, \varphi_\Y)$.
The main point is that we can always make 
the representation
of the C*-algebra $B$ from the representation of $(\X, \varphi_\X)$ 
agree with the representation of $B$ from the representation of $(\Y, \varphi_\Y)$.

\begin{theorem}\label{TensorCorres}
Let $A$, $B$, and $C$ be C*-algebras, let $(\X, \varphi_\X)$ 
be an ($A$,$B$) C*-correspondence, let $(\Y, \varphi_\Y)$ be a ($B$,$C$) C*-correspondence
such that $B$ acts nondegenerately on $\Y$, 
and let $\rho_C \colon C \to \Li(\Hi_0)$ be any  nondegenerate representation 
of $C$ on a Hilbert space $\Hi_0$. 
Then:
\begin{enumerate}
\item There are Hilbert spaces $\Hi_1$,$\Hi_2$, maps $\lambda_A \colon A \to \Li(\Hi_2)$, $\sigma_B \colon B \to \Li(\Hi_1)$, $\tau_\X \colon \X \to \Li(\Hi_1, \Hi_2)$, and 
$\pi_\Y \colon \Y \to \Li(\Hi_0, \Hi_1)$, such that 
$(\lambda_A, \sigma_B, \tau_\X)$ is a representation of $(\X, \varphi_\X)$ on $(\Hi_1, \Hi_2)$ and
$(\sigma_B, \rho_C, \pi_\Y)$ is a representation of $(\Y, \varphi_\Y)$ on $(\Hi_0, \Hi_1)$. \label{TC1}
\item For every pair $((\lambda_A, \sigma_B, \tau_\X)$, $(\sigma_B, \rho_C, \pi_\Y))$ as in (\ref{TC1}),
there is a map $\pi \colon \X \otimes_{\varphi_\Y} \Y \to \Li(\Hi_0, \Hi_2)$
satisfying $\pi(x \otimes y)=\tau_\X(x)\pi_\Y(y)$ and 
such that the triple $(\lambda_A, \rho_C, \pi)$ is a representation of 
$(\X \otimes_{\varphi_\Y} \Y, \widetilde{\varphi_\X})$ on $(\Hi_0, \Hi_2)$. \label{TC2}
\item The map $\pi$ from (\ref{TC2}) is an isomorphism from $\X \otimes_{\varphi_\Y} \Y$ to $\cj{\tau_\X(\X)\pi_\Y(\Y)}$. \label{TC3}
\item If in addition $\rho_C$ is also faithful then the representation $(\lambda_A, \rho_C, \pi)$ is isometric. \label{TC4}
\end{enumerate}
\end{theorem}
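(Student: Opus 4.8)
The strategy is to build everything on top of Theorem \ref{CorrespRep}, applied twice in the right order so that the middle Hilbert space $\Hi_1$ is shared. First, for part (\ref{TC1}), apply Theorem \ref{CorrespRep} to the $(B,C)$ C*-correspondence $(\Y, \varphi_\Y)$ with the given nondegenerate representation $\rho_C$ of $C$ on $\Hi_0$: this yields a Hilbert space $\Hi_1$, a representation $\sigma_B\colon B \to \Li(\Hi_1)$, and a bounded linear map $\pi_\Y\colon \Y \to \Li(\Hi_0,\Hi_1)$ so that $(\sigma_B,\rho_C,\pi_\Y)$ represents $(\Y,\varphi_\Y)$. Since $B$ acts nondegenerately on $\Y$, the last sentence of Theorem \ref{CorrespRep} guarantees that $\sigma_B$ is nondegenerate. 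Now feed this nondegenerate $\sigma_B$ into Theorem \ref{CorrespRep} applied to the $(A,B)$ C*-correspondence $(\X,\varphi_\X)$: this yields $\Hi_2$, $\lambda_A\colon A\to\Li(\Hi_2)$, and $\tau_\X\colon \X\to\Li(\Hi_1,\Hi_2)$ with $(\lambda_A,\sigma_B,\tau_\X)$ representing $(\X,\varphi_\X)$. Concretely, from the proof of Theorem \ref{CorrespRep}, $\Hi_1 = \Y\otimes_{\rho_C}\Hi_0$, $\Hi_2 = \X\otimes_{\sigma_B}\Hi_1$, $\pi_\Y(y)\xi = y\otimes\xi$, and $\tau_\X(x)\eta = x\otimes\eta$.

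For part (\ref{TC2}), the candidate map is $\pi(x\otimes y) = \tau_\X(x)\pi_\Y(y)$; I must check it is well defined on the algebraic balanced tensor product $\X\odot_B\Y$, i.e.\ that it respects the relation $xb\otimes y = x\otimes\varphi_\Y(b)y$. This is immediate from the compatibility conditions: $\tau_\X(xb)\pi_\Y(y) = \tau_\X(x)\sigma_B(b)\pi_\Y(y) = \tau_\X(x)\pi_\Y(\varphi_\Y(b)y)$, using condition (\ref{cco2}) for $\tau_\X$ and condition (\ref{cco1}) for $\pi_\Y$. To extend $\pi$ to the completion $\X\otimes_{\varphi_\Y}\Y$ I compute, for $z = \sum_j x_j\otimes y_j$, the inner product $\langle \pi(z)\xi, \pi(z)\eta\rangle$ in $\Hi_2$ and match it against $\rho_C(\langle z, z\rangle_C)$ using the formula (\ref{innerProd}) for the tensor-product inner product together with compatibility condition (\ref{cco3}) for both $\tau_\X$ and $\pi_\Y$; this shows $\pi(z)^*\pi(z') = \rho_C(\langle z, z'\rangle_C)$ on a dense subspace, hence $\|\pi(z)\| = \|z\|$ is an isometry on $\X\odot_B\Y$ and extends by continuity to all of $\X\otimes_{\varphi_\Y}\Y$. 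The compatibility conditions for $(\lambda_A,\rho_C,\pi)$ as a representation of $(\X\otimes_{\varphi_\Y}\Y, \widetilde{\varphi_\X})$ then follow: condition (\ref{cco3}) is the identity just checked, condition (\ref{cco2}) follows because $\pi(x\otimes y)\rho_C(c) = \tau_\X(x)\pi_\Y(y)\rho_C(c) = \tau_\X(x)\pi_\Y(yc) = \pi(x\otimes yc) = \pi((x\otimes y)c)$, and condition (\ref{cco1}) follows from $\widetilde{\varphi_\X}(a)(x\otimes y) = \varphi_\X(a)x\otimes y$ together with $\tau_\X(\varphi_\X(a)x) = \lambda_A(a)\tau_\X(x)$.

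For part (\ref{TC3}): the isometry established above shows $\pi$ is an isometric isomorphism onto its image, and by definition the image $\pi(\X\otimes_{\varphi_\Y}\Y)$ is the closure of $\op{span}\{\tau_\X(x)\pi_\Y(y) : x\in\X, y\in\Y\} = \tau_\X(\X)\pi_\Y(\Y)$ inside $\Li(\Hi_0,\Hi_2)$, which is $\cj{\tau_\X(\X)\pi_\Y(\Y)}$; I should note that $\pi$ is automatically an isomorphism of right Hilbert $C$-modules (equivalently $\rho_C(C)$-modules) since it intertwines the right actions by (\ref{cco2}) and the inner products by (\ref{cco3}). Part (\ref{TC4}) is then a one-line consequence of Remark \ref{FaithIsom}: if $\rho_C$ is faithful, then since condition (\ref{cco3}) gives $\|\pi(z)\|^2 = \|\pi(z)^*\pi(z)\| = \|\rho_C(\langle z,z\rangle_C)\| = \|\langle z,z\rangle_C\| = \|z\|^2$, the representation $(\lambda_A,\rho_C,\pi)$ is isometric.

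The main obstacle I anticipate is the well-definedness and isometry argument in part (\ref{TC2}): one must be careful that the relation being quotiented out in forming $\X\otimes_{\varphi_\Y}\Y$ (namely $B$-balancedness plus the null space of the pre-inner product) is exactly matched by the kernel of the operator-valued map $z\mapsto \pi(z)$, and the bookkeeping of nested tensor products $\X\otimes_{\sigma_B}(\Y\otimes_{\rho_C}\Hi_0)$ versus $(\X\otimes_{\varphi_\Y}\Y)\otimes_{\rho_C}\Hi_0$ must be handled cleanly — this is essentially an associativity-of-interior-tensor-product computation, which is standard (see Proposition 4.5 in \cite{lance_1995} and the surrounding discussion) but needs to be invoked carefully so that the formula $\pi(x\otimes y) = \tau_\X(x)\pi_\Y(y)$ is seen to agree with the creation-operator description of the representation of $(\X\otimes_{\varphi_\Y}\Y,\widetilde{\varphi_\X})$ coming directly from Theorem \ref{CorrespRep}.
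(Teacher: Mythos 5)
Your proposal follows essentially the same route as the paper: two applications of Theorem \ref{CorrespRep} in the order $\Y$ then $\X$ (so that $\sigma_B$ is shared and, by nondegeneracy of the $B$-action on $\Y$, nondegenerate), the definition $\pi(x\otimes y)=\tau_\X(x)\pi_\Y(y)$ checked against $B$-balancedness, a norm estimate to extend by continuity, and Remark \ref{FaithIsom} for part (4). One small correction: in part (2) the identity $\pi(z)^*\pi(z)=\rho_C(\langle z,z\rangle_C)$ yields only $\|\pi(z)\|\leq\|z\|$ in general, which is all the continuous extension needs; the equality $\|\pi(z)\|=\|z\|$ requires $\rho_C$ to be faithful and belongs in part (4), exactly as the paper arranges it. The associativity-of-interior-tensor-products bookkeeping you flag as a worry at the end is not actually needed, since the direct norm computation against equation (\ref{innerProd}) already gives boundedness of $\pi$ without identifying $\Hi_2$ with $(\X\otimes_{\varphi_\Y}\Y)\otimes_{\rho_C}\Hi_0$.
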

\begin{proof}
Since $\rho_C \colon C \to \Li(\Hi_0)$  is nondegenerate and 
$B$ acts nondegenerately on $\Y$, 
Theorem \ref{CorrespRep} gives a Hilbert space $\Hi_1$, 
a nondegenerate representation $\sigma_B \colon B \to \Li(\Hi_1)$, 
and a 
bounded linear map $\pi_\Y \colon \Y \to \Li(\Hi_0, \Hi_1)$ such that $(\sigma_B, \rho_C, \pi_\Y)$
is a representation of $(\Y, \varphi_\Y)$ on $(\Hi_0, \Hi_1)$. 
Hence, 
a second application of Theorem \ref{CorrespRep} gives a Hilbert 
space $\Hi_2$, a
representation $\lambda_A \colon A \to \Li(\Hi_2)$, 
and a bounded linear map $\tau_\X \colon \X \to \Li(\Hi_1, \Hi_2)$ such that $(\lambda_A, \sigma_B, \tau_\X)$
is a representation of $(\X, \varphi_\X)$ on $(\Hi_1, \Hi_2)$. This takes 
care of part (\ref{TC1}). 
For part (\ref{TC2}), we first prove the existence of the map $\pi$. 
First, fix $n \in \Z_{\geq 1}$, $x_1, \ldots, x_n \in \X$, and 
$y_1, \ldots, y_n \in \Y$. Then, using the fact that $(\sigma_B, \rho_C, \pi_\Y)$ is a representation at the second and third steps  
together with equation (\ref{innerProd})
at the sixth step we find
\begin{align*}
\Bigl\|  \sum_{j=1}^n \tau_\X(x_j)\pi_\Y(y_j) \Bigr\|^2  
& = \sup_{\| \xi \| =1 } \sum_{j,k=1}^n \langle \tau_\X(x_k)\pi_\Y(y_k)\xi, \tau_\X(x_j)\pi_\Y(y_j)\xi \rangle \\
& = \sup_{\| \xi \| =1 } \sum_{j,k=1}^n \langle \pi_\Y(y_j)^*\pi_\Y(\varphi_\Y(\langle x_j, x_k \rangle_B)y_k)\xi, \xi \rangle\\
& = \sup_{\| \xi \| =1 } \sum_{j,k=1}^n \langle \rho_C(\langle y_j , \varphi_\Y(\langle x_j, x_k \rangle_B)y_k \rangle_C)\xi, \xi\rangle\\
& \leq \Bigl\| \sum_{j,k=1}^n  \rho_C(\langle y_j , \varphi_\Y(\langle x_j, x_k \rangle_B)y_k \rangle_C)\Bigr\| \\
& \leq \Bigl\| \sum_{j,k=1}^n  \langle y_j , \varphi_\Y(\langle x_j, x_k \rangle_B)y_k \rangle_C\Bigr\| \\
& = \Bigl\| \Bigl\langle  \sum_{j=1}^n x_j \otimes y_j, \sum_{k=1}^n x_k\otimes y_k\Bigr\rangle_C\Bigr\| \\
& = \Bigl\|  \sum_{j=1}^n x_j \otimes y_j \Bigr\|^2.
\end{align*}
Moreover, a direct computation gives $\tau_\X(xb)\pi_\Y(y)=\tau_\X(x)\pi_\Y(\varphi_\Y(b)y)$
for any $x \in \X, y \in \Y$, and $b \in B$. Therefore, we can extend the map 
$x \otimes y \mapsto \tau_\X(x)\pi_\Y(y)$ to a well defined bounded linear map $\pi \colon \X \otimes_{\varphi_\Y} \Y \to \Li(\Hi_0, \Hi_2)$. 
To finish part (\ref{TC2}), It only remains to check that $(\lambda_A, \rho_C, \pi)$ is indeed a representation
of the correspondence $(\X \otimes_{\varphi_\Y} \Y, \widetilde{\varphi_\X})$
on $(\Hi_0, \Hi_2)$. This follows from three 
 immediate computations on elementary tensors using the fact that 
both  $(\lambda_A, \sigma_B, \tau_\X)$ and  $(\sigma_B, \rho_C, \pi_\Y)$ are representations. 
Part (\ref{TC3}) is now immediate from definition of $\pi$. Finally,
to check part (\ref{TC4}), Remark \ref{FaithIsom} shows that faithfulness of $\rho_C$ is
enough for $(\lambda_A, \rho_C, \pi)$
to be isometric, so we are done. 
\end{proof}

\textbf{Acknowledgments:} The author would like to thank N. Christopher Phillips 
for his advice the past few years and in particular for reading multiple
earlier versions of this paper, always giving useful comments that
significantly improved the exposition. 
He is also grateful to Andrey Blinov for stimulating conversations about the topics 
covered here, some of which led to developing the theory further than originally 
intended. The author also thanks Matthew Daws for pointing out typos 
and in particular for sending useful comments 
that gave rise to Remarks \ref{Daws_Comment3} and \ref{Daws_Comment4}.
Finally, the author thanks the referee their suggestions, 
including one dealing with an updated notation for representations on pairs of Hilbert spaces, 
making it more accessible for future applications.

\end{document}